\documentclass[11pt,leqno, a4paper]{amsart}
\usepackage{amsmath,amsthm,amscd,amssymb,amsfonts, amsbsy}
\usepackage{latexsym}
\usepackage{txfonts}
\usepackage{exscale}
\usepackage{graphicx}

\usepackage[colorlinks,citecolor=red,pagebackref,hypertexnames=false]{hyperref}
\usepackage{color}




\parskip=3pt



\calclayout
\allowdisplaybreaks


\theoremstyle{plain}
\newtheorem{theorem}[equation]{Theorem}
\newtheorem{lemma}[equation]{Lemma}
\newtheorem{corollary}[equation]{Corollary}

\newtheorem{definition}[equation]{Definition}

\newtheorem{example}[equation]{Example}

\theoremstyle{remark}
\newtheorem{remark}[equation]{Remark}

\numberwithin{equation}{section}

\newcommand{\eps}{\varepsilon}

\newcommand{\dist}{\operatorname{dist}}

\newcommand{\re}{\mathbb{R}}
\newcommand{\rn}{\mathbb{R}^n}

\newcommand{\ree}{\mathbb{R}^{n+1}}
\newcommand{\N}{\mathbb{N}}

\newcommand{\C}{\mathcal{C}}

\newcommand{\om}{\Omega}

\newcommand{\nt}{n.t. \nabla\mathcal{S}}

\newcommand{\pom}{{\partial\Omega}}

\newcommand{\hm}{\omega}

\newcommand{\RNum}[1]{\uppercase\expandafter{\romannumeral #1\relax}}
\newcommand{\squeezeup}{\vspace{-10mm}}

\renewcommand{\emptyset}{\mbox{\textup{\O}}}

\DeclareMathOperator{\supp}{supp}

\DeclareMathOperator{\diam}{diam}

\DeclareMathOperator{\interior}{int}

\begin{document}
\allowdisplaybreaks

\title[Reifenberg Flatness and Oscillation of the Unit Normal Vector]{Reifenberg Flatness and Oscillation of the Unit Normal Vector}
\author{Simon Bortz}
\author{Max Engelstein}
\thanks{This work was mostly done while both authors were in residence at the MSRI Harmonic Analysis program, supported under NSF Grant No. DMS-1440140.  The first author was supported by the NSF INSPIRE Award DMS-1344235 and the second author was supported by an NSF MSPRF DMS-1703306.}
\subjclass[2010]{Primary 35R35, 49J52. Secondary 28A75, 31A15.}
\keywords{two-phase free boundary problems, harmonic measure, Reifenberg Flatness, Chord Arc Domains}
\address{	
	School of Mathematics, University of Minnesota, Minneapolis, MN
55455, USA}
	\email{bortz010@umn.edu} 
\address{Department of Mathematics\\Massachusetts Institute of Technology\\Cambridge, MA, 02139-4307 }
\email{maxe@mit.edu}

\begin{abstract}
We show (under mild topological assumptions) that small oscillation of the unit normal vector implies Reifenberg flatness. We then apply this observation to the study of chord-arc domains and to a quantitative version of a two-phase free boundary problem for harmonic measure previously studied by Kenig-Toro \cite{kenigtorotwophase}.
\end{abstract}

\maketitle

\tableofcontents

\section{Introduction}

The connections between the regularity of a domain (often expressed in terms of the oscillation of the unit normal)  and  potential theory, singular integrals and regularity for elliptic PDE has been a topic of considerable interest in mathematics (see, e.g. \cite{toronotices}).  An important object of study in this field are  chord arc domains. Roughly speaking, chord arc domains exhibit interior and exterior fatness, have quantitative connectivity and have (Ahlfors) regular surface measure. If there is sufficient control on the oscillation of the unit normal (in the $\mathrm{BMO}$ sense) and the domain is sufficiently flat (in the sense of Reifenberg \cite{Reifenberg}) we say the chord arc domain has small constant. Chord arc domains with small constant were introduced by Semmes in \cite{semmes1} and \cite{semmes2} and subsequently studied by Kenig and Toro (see e.g. \cite{kenigtoroannals}), Hofmann, Mitrea and Taylor (see e.g. \cite{hofmannmitreataylor}) and many others. In particular, chord arc domains are a natural setting in which to consider questions of regularity for harmonic functions or harmonic measure (see e.g. \cite{milakispiphertoro}, \cite{kenigtoroannals}). 

When Semmes introduced chord arc domains with small constant (see \cite{semmes1} and \cite{semmes2}) he made the assumption of {\it a priori} flatness and made the further restriction of working with $C^2$ surfaces. His focus was on operator theoretic and function theoretic properties of chord arc domains with small constant, e.g. Semmes showed the Cauchy integral operator restricted to a chord arc domain with small constant was ``almost" self adjoint. In addition, Semmes expressed interest in potential theoretic characterizations. These potential theoretic characterizations were investigated by Kenig and Toro, without the {\it a priori} assumption of smoothness but with the {\it a priori} assumption of Reifenberg flatness in \cite{kenigtoroduke}, \cite{kenigtoroannals} and \cite{kenigtoroannsci}. As a consequence of our main theorem (Theorem \ref{nuBMOthrm.thrm}), we show that the flatness hypothesis is redundant (see, e.g. Corollary \ref{BMOimpflatnessdomains.cor}), this in turn, should allow one to remove the {\it a priori} assumption of Reifenberg flatness from many theorems in the aforementioned works of Kenig and Toro (e.g.,  Theorem 4.2 in \cite{kenigtoroannals}). 

Our main theorem is essentially the following: under some mild assumptions on the topology (two sided corkscrews) and surface measure (Ahlfors regularity) of a domain, small oscillation of the unit normal implies flatness in the sense of Reifenberg \cite{Reifenberg}. In addition to the application mentioned above, we also use this observation to study a two-phase free boundary result first introduced by Kenig and Toro \cite{kenigtorotwophase} and examined further by the first author and Hofmann \cite{bortzhofmann}. Specifically, we can conclude a quantitative version of Theorem 1.1 in \cite{bortzhofmann} (see our Theorem \ref{FBthrm.thrm} below). 

Finally, we point out that there is a robust theory of Reifenberg-type parameterizations of surfaces whose unit normal has controlled oscillation (see, e.g. \cite{torobilip}, \cite{jessica1} and \cite{jessica2}).  Reifenberg-type parameterizations are a powerful tool in geometric analysis (see, e.g. \cite{nabervaltorta}) and we believe that there should be interesting connections between our work and these parameterizations. 

\subsection{Definitions}

Here we collect some definitions that we will need later on. The first is Ahlfors-regularity,

\begin{definition}[Ahlfors Regular (AR)]\label{def:AR} We say a closed set $E \subset \ree$ is Ahlfors regular (AR) if there exists a constant $C$ such that
\begin{equation}\label{ADR.eq}
C^{-1}r^n \le H^n(B(x,r)\cap E) \le Cr^n
\end{equation}
for all $x \in E$ and $r \in (0, \diam(E))$, where $H^n$ is the $n-$dimensional Hausdorff measure. 

When $\om$ is an open set we often write $\sigma \equiv H^n|_\pom$, the surface measure for $\om$. We may sometimes abuse terminology and say that $\sigma$ is an Ahlfors-regular measure, by which we mean that  \eqref{ADR.eq} holds with $E = \pom$. When referencing a dependence on the constant $C$ in \eqref{ADR.eq} we will simply write $AR$.
\end{definition}

Our second is corkscrew points. We need to guarantee that our domains are ``fat" on both the inside and out to prevent degeneracy. 

\begin{definition}[Two-sided Corkscrew Condition]\label{def:Corkscrews} We say an open set $\om \subset \ree$ satisfies the $(M,R_0)$ two-sided corkscrew condition if for every $x \in \pom$ and $r \in (0, R_0)$ there exist two balls $B_1 \equiv B(x_1,r/M)$ and $B_2 \equiv B(x_2, r/M)$ such that $B_1 \subset \om \cap B(x,r)$ and $B_2 \subset \om^c \cap B(x,r)$, where $\om^c$ denotes the compliment of $\om$. We call $x_1$ and $x_2$ interior and exterior corkscrew points respectively.

\end{definition}

There are several connections between Ahlfors regularity and corkscrews; if a domain, $\om$, satisfies a two-sided corkscrew condition then it is automatically lower Ahlfors regular. Moreover, David and Jerison \cite{davidandjerison} observed that if $\pom$ is Ahlfors regular and $\om$ satisfies the two-sided corkscrew condition then $\pom$ is uniformly rectifiable (see Definition \ref{defur} below). Our next condition is a quantitative measure of connectedness. 

\begin{definition}[Harnack Chain Condition]\label{def:HarnackChains} Following \cite{jerisonandkenig}, we say
that $\Omega$ satisfies the $(C, R)$-Harnack Chain condition if
for every $0 < \rho \leq R,\, \Lambda\geq 1$, and every pair of points
$X,X' \in \Omega$ with $\delta(X),\,\delta(X') \geq\rho$ and $|X-X'|<\Lambda\,\rho$, there is a chain of
open balls
$B_1,\dots,B_N \subset \Omega$, $N\leq C\log_2\Lambda + 1$,
with $X\in B_1,\, X'\in B_N,$ $B_k\cap B_{k+1}\neq \emptyset$
and $C^{-1}\diam (B_k) \leq \dist (B_k,\partial\Omega)\leq C\diam (B_k).$  The chain of balls is called
a ``Harnack Chain''.
\end{definition}

Domains which are both quantitatively fat and quantitatively connected are called NTA (Non-tangentially accessible), and were introduced by Jerison and Kenig \cite{jerisonandkenig} as a natural setting in which the boundary behavior of harmonic functions can be understood. 

\begin{definition}[NTA and Chord Arc Domains]\label{def:CAD} We say $\om \subset \ree$ is a Non-Tangentially Accessible Domain (NTA) with constants $(M, R_0)$, if it satisfies the $(M, R_0)$- Harnack chain condition and the $(M, R_0)$ two-sided corkscrew condition. If $\Omega$ is unbounded, we require that $\mathbb R^n \backslash \partial \Omega$ consists of two, non-empty, connected components. Note that if $\Omega$ is unbounded, then $R_0 = \infty$ is allowed. 

Finally, if $\om$ is an NTA domain whose boundary is Ahlfors regular we say $\om$ is a chord arc domain.
\end{definition}

In the definition of unbounded NTA domains, it is sometimes required that $R_0 = \infty$ (see, e.g. \cite{kenigtoroduke}, \cite{kenigtorotwophase}). This is to obtain estimates on harmonic measure/functions at arbitrarily large scales. Since we are only interested in the geometric properties of $\Omega$, we allow $R_0 < \infty$ even for unbounded $\Omega$. For further discussion of unbounded domains, see Remark \ref{unboundedspellstrouble} below.

We need to also measure the ``flatness" of a set.  Let $E \subset \ree$ be a locally compact set. For $Q \in E$ and $r>0$ define
$$\Theta(Q,r) = \inf_L \left\{\frac{1}{r}D( E \cap B(Q, r),L(Q, r) \cap B(Q, r))  \right\}$$
where the infimum is taken over all $n-$planes containing $Q$. Here D denotes the Hausdorff distance, that is, for $A,B \subset \ree$,
$D[A,B] = \sup\{d(a,B): a \in A\}  + \sup\{d(b,A): b \in B\}$. With this in hand, we can define flatness after Reifenberg \cite{Reifenberg};

\begin{definition}[Reifenberg Flat and Vanishing Reifenberg Flat]\label{def:Reifenberg}
We say $E$ is 
$\delta-$Reifenberg
 flat for some $\delta > 0$ if for each compact set $K \subset \ree$ there exists $R_K$ such that
$$\sup_{r \in (0,R_K]}\sup_{Q \in K \cap E} \Theta(Q,r) < \delta.$$
We say $E$ is $(\delta, R)-$Reifenberg flat if 
$$\sup_{r \in (0,R]}\sup_{Q \in E} \Theta(Q,r) < \delta.$$ 
We say $E$ is vanishing Reifenberg flat if for every compact set $K \subset \ree$
$$\lim_{r \to 0} \sup_{Q \in E \cap K} \Theta(Q,r) = 0.$$

Finally, we say that a domain $\Omega \subset \mathbb R^{n+1}$ is $\delta$-Reifenberg flat (or $(\delta, R)$-Reifenberg flat, vanishing Reifenberg flat), if $\partial \Omega$ is $\delta$-Reifenberg flat (resp. $(\delta, R)$-Reifenberg flat, vanishing Reifenberg flat) and $\Omega$ satisfies the {\bf separation condition}: for every compact set $K \subset \ree$ there exists $R > 0 $ such that for $Q \in \pom \cap K$ and $r \in (0,R]$ there exists
an n-dimensional plane $L(Q, r)$ containing $Q$ and a choice of unit normal vector to $L(Q, r)$,
$\vec{n}_{Q,r}$, satisfying
$$T^+(Q, r) = X = (x, t) = \left\{x +t
\vec{n}_{Q,r} \in B(Q, r): x \in L(Q, r), t > \tfrac{r}{4} \right\} \subset \om, $$
and
$$T^-(Q, r) = X = (x, t) = \left\{x +t
\vec{n}_{Q,r} \in B(Q, r): x \in L(Q, r), t < \tfrac{-r}{4} \right\} \subset \om^c.$$
Additionally, if $\om$ is unbounded we have the further requirement that $\ree \setminus \pom$ consists of two connected components $\om$, that $\interior(\om^c) \neq \emptyset$ and that $\partial \Omega$ is $(\delta_n, R)$-Reifenberg flat for some $R > 0$. Here $\delta_n > 0$ is chosen small enough so that $\Omega$ is an NTA domain (up to scale $R_0 = R/10$, see Lemma 3.1 in \cite{kenigtoroduke}). 
\end{definition}

For unbounded domains, it is often usual to assume that $\partial \Omega$ is $(\delta_n, \infty)$-Reifenberg flat. This is to ensure that $\Omega$ is an NTA domain at scale $R_0 = \infty$. Since we allow unbounded NTA domains to have local estimates, we only require that unbounded Reifenberg flat domains have local flatness. Again see Remark \ref{unboundedspellstrouble} below.

To simplify future proofs, let us make a quick remark on how the separation condition interacts with the two-sided corkscrew condition.

\begin{remark}\label{separationredundant}
For a bounded domain $\Omega$, we note that if $\partial \Omega$ is $(\delta, R_1)$-Reifenberg flat (in the sense of sets) and $\Omega$ satisfies the $(M, R_0)$-two-sided corkscrew condition, then there exists a $\delta_0 \equiv \delta_0(M, R_0, R_1) > 0$ such that if $\delta < \delta_0$, then $\Omega$ is a $(\delta, R_2)$-Reifenberg flat domain, where $R_2 = \min(R_0, R_1)/2$. The same holds for unbounded $\Omega$ with the additional {\it a priori} assumption that $\mathbb R^{n+1} \setminus \pom$ consists of two connected, non-empty components, one of which is $\Omega$. 

To see this, note that if $\delta$ is small enough (compared to $M$) and $L(Q, r)$ is the plane which best approximates $B(Q,r)\cap \pom$, then both the interior and exterior corkscrew points to $Q$ at scale $r$, call them $A^{\pm}$, are not contained in the $\delta r$-slab around $L(Q,r)$, call it $S_\delta(Q,r)$. Furthermore they must be on different sides of $L(Q,r)$ (otherwise the segment between them will lie outside $S_\delta(Q,r)$ but contain a point of $\pom$, contradicting the Reifenberg flatness of $\pom$). Similarly, if there is a point, $y$, in $\Omega^c\cap B(Q,r)$ which is outside the $r/4$-slab of $L(Q,r)$ on the same side of $L(Q,r)$ as $A^+$ then the segment between $A^+$ and $y$ must contain a point in $\pom$ outside of $S_\delta(Q,r)$, again a contradiction of Reifenberg flatness. As a similar argument holds for points in $\Omega\cap B(Q,r)$ on the same side of $L(Q,r)$ as $A^-$, the  separation property follows.
\end{remark}

Finally, we need a measure of control on the oscillation of a function. We introduce the classic BMO and VMO function spaces (note that if $\pom$ is Ahlfors regular, then it is a space of homogeneous type, see, e.g. \cite{SHT}, and thus much of the classical theory of these spaces extends to Ahlfors regular domains). 

\begin{definition}[BMO and VMO] Let $\om \subset \ree$ be a set of locally finite perimeter with $\pom$ Ahlfors regular. Let $f \in L^2_{loc}(d\sigma)$ where $\sigma = H^n|_\pom$, we say that $f \in BMO(d\sigma)$ if
$$\lVert f \rVert_{BMO(d\sigma)} = \sup_{s > 0}\sup_{Q \in \pom} \left( \fint_{B(Q,s)}\left| f(z) - \fint_{B(Q,s)} f(x) \, d\sigma(x) \right|^2 \, d\sigma(z) \right)^{\frac{1}{2}}< \infty.$$
We denote by $\mathrm{VMO}$ the closure of uniformly continuous functions on $\pom$ in the $BMO$-norm. There is also a notion of $\mathrm{VMO}_{loc}$; $f\in \mathrm{VMO}_{loc}$ if for every compact set $K \subset \ree$
$$\lim_{s \to 0}\sup_{Q \in \pom \cap K} \left( \fint_{B(Q,s)}\left| f(z) - \fint_{B(Q,s)} f(x) \, d\sigma(x) \right|^2 \, d\sigma(z) \right)^{\frac{1}{2}} = 0.$$
\end{definition}

\begin{remark}\label{vmolocvsvmo}
We should remark that what we call $\mathrm{VMO}_{loc}$ (after \cite{bortzhofmann}) is actually called $\mathrm{VMO}$ in some points in the literature (see, e.g. \cite{kenigtoroannals}). However, the definition given above is more suited to unbounded domains; in particular, for bounded domains the two definitions are equivalent, but for unbounded domains the ``closure" definition controls the behavior of the function at large scales (see the discussion in \cite{kenigtoroannsci}). 
\end{remark}

\section{Small $\mathrm{BMO}$ norm implies Reifenberg flat}\label{sec:mainarg}

The goal of this section is to prove our main theorem, Theorem \ref{nuBMOthrm.thrm}. 

\begin{theorem}\label{nuBMOthrm.thrm} Suppose $\om \subset \ree$ is an open set satisfying the $(M,R_0)$-two-sided corkscrew condition whose boundary is Ahlfors regular. Then for every $\delta > 0$ there exists $c_\delta$ and $r_\delta$ depending only on $\delta$, the Ahlfors regularity constant and the constants in the two-sided corkscrew condition such that if 
$$\lVert \nu \rVert_{BMO(d\sigma)} < c_\delta,$$
(where $\sigma$ is the surface measure for $\om$ and $\nu$ is the outer unit normal to $\om$), then $\pom$ is $(\delta, r_\delta)-$Riefenberg flat. In light of Remark \ref{separationredundant}, we can choose $c_\delta$ small enough such that  $\om$ also satisfies the separation condition and thus is a  $\delta$-Reifenberg flat domain.
\end{theorem}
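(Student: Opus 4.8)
The plan is to argue by contradiction and compactness; the point of the two-sided corkscrew hypothesis will be to force the topological boundary of the limiting set to coincide with its reduced boundary, so that a vanishing $BMO$ bound on the unit normal genuinely collapses the boundary onto a hyperplane. Fix $\delta>0$ and set $r_\delta:=R_0$ (if $R_0=\infty$, any finite value works). Suppose the conclusion fails. Then, keeping the Ahlfors regularity constant and the corkscrew constants $(M,R_0)$ fixed, we obtain open sets $\Omega_k\subset\ree$ with these constants, with $\lVert\nu_k\rVert_{BMO(d\sigma_k)}\to0$, and with $Q_k\in\partial\Omega_k$ and $r_k\in(0,R_0]$ such that $\Theta_{\partial\Omega_k}(Q_k,r_k)\ge\delta$. (The upper Ahlfors bound and Federer's criterion guarantee each $\Omega_k$ has locally finite perimeter, and in the AR plus corkscrew setting $H^n(\partial\Omega_k\setminus\partial^*\Omega_k)=0$, so $\nu_k$ is defined $\sigma_k$-a.e.\ and the hypothesis is meaningful.) Since the Ahlfors constant, the corkscrew constants (which for $\widetilde\Omega_k:=r_k^{-1}(\Omega_k-Q_k)$ hold up to scale $R_0/r_k\ge1$), the scale-$1$ flatness coefficient $\Theta(\cdot,1)$, and $\lVert\nu\rVert_{BMO}$ are all invariant under translation and dilation, it suffices to rule out the existence of open sets $\widetilde\Omega_k$ with fixed Ahlfors and corkscrew constants, $0\in\partial\widetilde\Omega_k$, $\lVert\nu_{\widetilde\Omega_k}\rVert_{BMO}\to0$, and $\Theta_{\partial\widetilde\Omega_k}(0,1)\ge\delta$ for all $k$.

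Using the Ahlfors bounds (Blaschke selection for the boundaries, weak-$*$ compactness for the surface measures, and the uniform local perimeter bound coming from the upper Ahlfors estimate) we pass to a subsequence so that $\partial\widetilde\Omega_k\to\Sigma$ in the local Hausdorff distance for some closed $\Sigma\ni0$, $\sigma_k=H^n|_{\partial\widetilde\Omega_k}\rightharpoonup\mu$, and $\mathbf1_{\widetilde\Omega_k}\to\mathbf1_{\Omega_\infty}$ in $L^1_{\mathrm{loc}}$ for a set $\Omega_\infty$ of locally finite perimeter, whence $\vec\mu_k:=\nu_k\sigma_k=-D\mathbf1_{\widetilde\Omega_k}\rightharpoonup\vec\mu:=-D\mathbf1_{\Omega_\infty}$. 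The two-sided corkscrew condition is what reconciles these limits: passing the interior and exterior corkscrew balls at boundary points to the limit (where $L^1_{\mathrm{loc}}$-convergence upgrades ``contained in $\widetilde\Omega_k$'' to ``contained in the measure-theoretic interior of $\Omega_\infty$'', and likewise for the exterior) shows that every point of $\Sigma$ has points of both the measure-theoretic interior and the measure-theoretic exterior of $\Omega_\infty$ arbitrarily nearby, hence lies in $\operatorname{supp}|D\mathbf1_{\Omega_\infty}|$; together with the lower Ahlfors bound for $\mu$ (which survives the limit) this gives $\Sigma=\operatorname{supp}\mu=\operatorname{supp}|D\mathbf1_{\Omega_\infty}|$. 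Also $0\in\Sigma$ because $0\in\partial\widetilde\Omega_k$ for every $k$.

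Now I use the $BMO$ smallness. Fix $r_0\in(10,11)$ with $\mu(\partial B(0,r_0))=0$ and set $B:=B(0,r_0)$; note $\mu(B)>0$ since $0\in\operatorname{supp}\mu$. As $|\nu_k|=1$ $\sigma_k$-a.e., expanding the square yields
\[
\fint_B|\nu_k-(\nu_k)_B|^2\,d\sigma_k=1-|(\nu_k)_B|^2\le\lVert\nu_k\rVert_{BMO}^2\longrightarrow0,
\]
while $(\nu_k)_B=\vec\mu_k(B)/\sigma_k(B)\to\vec\mu(B)/\mu(B)$ by weak-$*$ convergence; hence $|\vec\mu(B)|=\mu(B)$. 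Since $|\vec\mu(B)|\le|\vec\mu|(B)\le\mu(B)$, all three are equal, so $\mu=|\vec\mu|$ on $B$ and, by the equality condition in $|\vec\mu(B)|\le|\vec\mu|(B)$ (which forces the polar $d\vec\mu/d|\vec\mu|$ to be $|\vec\mu|$-a.e.\ constant on $B$), $D\mathbf1_{\Omega_\infty}=-\nu_0\,|D\mathbf1_{\Omega_\infty}|$ on $B$ for a fixed unit vector $\nu_0$. Feeding vector fields $\varphi\in C_c^\infty(B)$ with $\varphi\perp\nu_0$ into the Gauss--Green formula gives $\int_{\Omega_\infty}\operatorname{div}\varphi=0$, so $\mathbf1_{\Omega_\infty}$ depends only on $x\cdot\nu_0$ inside $B$; as the outer normal is everywhere $+\nu_0$ the resulting one-dimensional profile is monotone, forcing $\Omega_\infty\cap B=\{x\cdot\nu_0<c\}\cap B$ and therefore $\Sigma\cap B=\operatorname{supp}|D\mathbf1_{\Omega_\infty}|\cap B=\{x\cdot\nu_0=c\}\cap B$. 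Since $0\in\Sigma$, we get $c=0$. But then local Hausdorff convergence $\partial\widetilde\Omega_k\to\Sigma$ forces $\Theta_{\partial\widetilde\Omega_k}(0,1)\to0$ (use the hyperplane $\{x\cdot\nu_0=0\}$ as competitor in the infimum defining $\Theta$), contradicting $\Theta_{\partial\widetilde\Omega_k}(0,1)\ge\delta$. This proves $\partial\Omega$ is $(\delta,r_\delta)$-Reifenberg flat, and, after possibly shrinking $c_\delta$, Remark~\ref{separationredundant} (applied with $R_1=r_\delta$, and in the unbounded case using the standing assumption that $\ree\setminus\partial\Omega$ has exactly two components one of which is $\Omega$) upgrades this to the separation condition, so $\Omega$ is a $\delta$-Reifenberg flat domain.

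The main obstacle is the compactness step of the second paragraph: showing that the class of open sets with fixed Ahlfors and two-sided corkscrew constants is precompact in the joint topology (local Hausdorff convergence of boundaries, $L^1_{\mathrm{loc}}$-convergence of characteristic functions, weak-$*$ convergence of surface measures) and, crucially, that \emph{all} of the structure passes to the limit---in particular that the corkscrew condition survives and thereby prevents $\Sigma$ from containing any ``extra'' piece of boundary invisible to $|D\mathbf1_{\Omega_\infty}|$ (such as a spiral). That the two-sided corkscrew hypothesis is genuinely needed here is illustrated by a logarithmic spiral in $\mathbb{R}^2$, i.e.\ a curve whose tangent direction, as a function of arclength $s$, is a small multiple of $\log s$: its unit normal has arbitrarily small $BMO$ norm, yet the curve fails to be Reifenberg flat at the center of the spiral---which is consistent with the theorem only because such a curve does not separate $\mathbb{R}^2$ and hence cannot be the boundary of a domain satisfying the exterior corkscrew condition.
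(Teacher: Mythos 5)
Your proposal is correct, and while it follows the same global strategy as the paper (contradiction, blow-up, compactness, classification of the limit as a half-space), the heart of the argument --- showing that the limiting unit normal is constant --- is carried out by a genuinely different route. The paper identifies the lack of a Portmanteau theorem for signed measures as the primary difficulty and works around it at some length: it fixes $\vec N=\lim_i\fint_{B(0,1)}\nu_i\,d\sigma_i$, propagates this limit to all balls centered on $\pom_\infty$, and then, for a.e.\ reduced-boundary point, tests the weak convergence $\nu_i\,d\sigma_i\rightharpoonup\nu_\infty\,d\sigma_\infty$ against cutoffs of the form $\vec N\xi$, exploiting the pointwise inequality $1-\vec N\cdot\nu_i\ge0$, Chebyshev, and lower semicontinuity of perimeter to squeeze out $\vec N\cdot\nu_\infty(P)\ge1$. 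You instead extract a weak-$*$ subsequential limit $\mu$ of the positive measures $\sigma_k$ up front and choose the ball $B$ with $\mu(\partial B)=0$; since the total variations $|\vec\mu_k|=\sigma_k$ converge to $\mu$, this is exactly the hypothesis under which the Portmanteau-type convergences $\vec\mu_k(B)\to\vec\mu(B)$ and $\sigma_k(B)\to\mu(B)$ are legitimate (the paper's counterexample $\delta_{1-1/n}-\delta_1$ fails precisely because the limit of the total variations charges $\partial A$). Combined with the identity $\fint_B|\nu_k-(\nu_k)_B|^2\,d\sigma_k=1-|(\nu_k)_B|^2$ and the rigidity $|\vec\mu(B)|=|\vec\mu|(B)$ forcing a constant polar, this is cleaner and avoids the test-function bookkeeping; it is notable that the paper only recovers $\sigma_i\rightharpoonup\sigma_\infty$ as a remark \emph{after} its proof, whereas you use a subsequential version of it as the engine. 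Two minor observations: (i) your choice $r_\delta=R_0$ means the $r_k$ need not tend to $0$, so the limit set inherits corkscrews only up to scale comparable to $1$ rather than at all scales as in the paper's Lemma \ref{KTlemma}; this is harmless because your argument is entirely local in $B$, and in fact yields the slightly sharper conclusion that the flatness holds up to scale $R_0$. (ii) The compactness package you correctly flag as the main obstacle (joint Hausdorff, $L^1_{\mathrm{loc}}$ and weak-$*$ convergence, survival of the two-sided corkscrews, and the identification $\Sigma=\supp|D\mathbf{1}_{\Omega_\infty}|$ ruling out phantom boundary) is precisely the content of the paper's Lemma \ref{KTlemma}, imported from \cite{kenigtoroannsci} and \cite{badgerengelsteintoro}, so you may simply cite it rather than reprove it.
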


\begin{remark}\label{relationshiptoHMT}
We are grateful to Steve Hofmann for pointing out Theorem 4.19 in \cite{hofmannmitreataylor}, which is very closely related to our Theorem \ref{nuBMOthrm.thrm}. There it is shown (as a corollary of a much larger theory) that control on the oscillation of the unit normal implies Reifenberg flatness, under the {\it a priori} assumption that $\Omega$ is a two-sided John domain (see \cite{hofmannmitreataylor} for more details and definitions). 

Our result is more general as the two-sided John condition is replaced by the larger class of two-sided corkscrew domains. We also remark that the methods of proof are completely different; in \cite{hofmannmitreataylor} the John condition is used to establish a Poincar\'e inequality (see Proposition 4.13 there) on the boundary which in turn allows for a Semmes-type decomposition (Theorem 4.16 there). Reifenberg flatness then follows easily. In contrast, our proof is by compactness and uses only elementary real analysis estimates (along with some machinery from the theory of domains with locally finite perimeter). 

Finally, we remark that in light of Theorem \ref{nuBMOthrm.thrm} and some of the subsequent corollaries,  the assumption of two-sided John can be replaced by two-sided corkscrew in some of the theorems in \cite{hofmannmitreataylor} (e.g. in their Theorem 4.16). 
\end{remark}

We begin the proof of Theorem \ref{nuBMOthrm.thrm} with a compactness lemma--the proof of which is essentially contained in \cite{kenigtoroannsci}.

\begin{lemma}[{\cite[Theorem 4.1]{kenigtoroannsci}}, {\cite[Appendix B]{badgerengelsteintoro}}]\label{KTlemma} Let $M, R_0 > 0$. Suppose for each $i \in \N$, $\om^{(i)}\subset \ree$ is an open set with uniformly $n-$Ahlfors-regular boundary, which satisfies the $(M,R_0)$-two sided corkscrew condition and $0 \in \pom^{(i)}$. Let $0 < r_i < \infty$ with $r_i \downarrow 0$ and set
$$\om_i^{+} := \frac{1}{r_i}\om^{(i)}, \quad \om_i^{-} := \interior((\om_i^+)^c), \quad \pom_i := \frac{1}{r_i}\pom^{(i)}.$$
Then there exists a subsequence (which may depend on the $\{r_i\}$), which we relabel such that 
\begin{enumerate}
\item $\om_i^+ \to \om_\infty^+$ in the Hausdorff distance uniformly on compact sets,
\item $\om_i^- \to \om_\infty^- = \interior( (\om_\infty^+)^c) $  in the Hausdorff distance uniformly on compact sets and
\item $\pom_i \to \pom_\infty^+ = \pom_\infty^-$  in the Hausdorff distance uniformly on compact sets.
\end{enumerate}
Moreover, $\om_\infty^+$ (and $\om_\infty^-$) satisfies the two-sided corkscrew condition at all scales, is a set of locally finite perimeter whose topological boundary coincides with its measure theoretic boundary and $\pom_\infty^+$ is Ahlfors-regular. Finally, the corkscrew and Ahlfors-regularity constants of $\om_\infty$ depend only on the ambient dimension and the corkscrew and Ahlfors-regularity constants of the $\om_i$. 
\end{lemma}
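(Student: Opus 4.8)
\medskip

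\noindent\textit{Proof sketch.} The plan is to combine a diagonal compactness argument for closed sets in the local Hausdorff distance with two ``persistence'' facts: that corkscrew balls survive the limit, and that the surface measures converge weak-$*$ to an Ahlfors regular measure. For the first point, exhaust $\ree$ by the balls $B(0,k)$, $k\in\N$; on each such ball closed subsets are compact for the Hausdorff distance (Blaschke selection), so a diagonal argument produces a single subsequence along which $\overline{\om_i^+}$, $\overline{\om_i^-}$ and $\pom_i$ all converge in the Hausdorff distance uniformly on compact subsets of $\ree$. Write $\om_\infty^\pm$ for the interiors of the respective limits of $\overline{\om_i^\pm}$, and let $F$ denote the limit of $\pom_i$. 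Since $0\in\pom_i$ for every $i$ we have $0\in F$, which rules out the degenerate possibilities $\om_\infty^+=\emptyset$ and $\om_\infty^+=\ree$. Using the next step (the corkscrew balls of $\om_i^\pm$ converge to balls contained in $\om_\infty^\pm$), one checks that $F$ has empty interior and that $\overline{\om_\infty^+}\cup\overline{\om_\infty^-}=\ree$; together these force $F=\pom_\infty^+=\pom_\infty^-$ and $\om_\infty^-=\interior((\om_\infty^+)^c)$, which is conclusions (1)--(3).

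\noindent\textbf{Two-sided corkscrew at all scales.} Fix $x\in\pom_\infty^+$ and $r>0$, and choose $x_i\in\pom_i$ with $x_i\to x$. Since $r_i\downarrow 0$, for $i$ large we have $r\,r_i<R_0$, so $\om_i^+$ admits an interior and an exterior corkscrew ball at $x_i$ of radius $r/M$ inside $B(x_i,r)$. Passing to the Hausdorff limit, these converge to balls of radius $r/M$ contained in $\overline{\om_\infty^+}\cap\overline{B(x,r)}$ and $\overline{\om_\infty^-}\cap\overline{B(x,r)}$ respectively; shrinking them slightly gives genuine interior and exterior corkscrew balls for $\om_\infty^+$ at $x$ and scale $r$ with a constant depending only on $M$. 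As $r>0$ was arbitrary, $\om_\infty^+$ (and, by symmetry, $\om_\infty^-$) satisfies the two-sided corkscrew condition at all scales. In particular $\pom_\infty^+$ is lower Ahlfors regular, and every $x\in\pom_\infty^+$ satisfies both $|B(x,r)\cap\om_\infty^+|\gtrsim r^{n+1}$ and $|B(x,r)\cap(\om_\infty^+)^c|\gtrsim r^{n+1}$.

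\noindent\textbf{Upper Ahlfors regularity.} Hausdorff convergence carries no information about measure, so this is the crux of the argument. By the uniform upper Ahlfors bound and a covering argument, the surface measures $\sigma_i:=H^n|_{\pom_i}$ have uniformly bounded mass on every ball $B(0,k)$, so after passing to a further subsequence $\sigma_i\rightharpoonup\mu$ weak-$*$ for some Radon measure $\mu$; since $\pom_i\to\pom_\infty^+$ in the Hausdorff distance, $\supp\mu\subseteq\pom_\infty^+$. Lower semicontinuity on open balls together with the uniform upper bound for $\sigma_i$ yields $\mu(B(x,r))\le Cr^n$, while upper semicontinuity on closed balls together with the uniform lower Ahlfors bound for the $\om_i$ (now valid for all $r>0$, since blowing up sends the upper scale to $\infty$) yields $\mu(\overline{B(x,r)})\ge c^{-1}r^n$ for every $x\in\pom_\infty^+$, so in fact $\supp\mu=\pom_\infty^+$. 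Thus $\mu$ is an $n$-Ahlfors regular measure supported exactly on $\pom_\infty^+$, and a standard density-comparison argument then shows that $H^n|_{\pom_\infty^+}$ is comparable to $\mu$. Hence $\pom_\infty^+$ is Ahlfors regular, with constant depending only on $n$ and the Ahlfors and corkscrew constants of the $\om_i$.

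\noindent\textbf{Locally finite perimeter; topological versus measure-theoretic boundary.} By the density estimates of the second step, no point of $\pom_\infty^+$ is a point of density $0$ or $1$ for $\om_\infty^+$, so the essential (measure-theoretic) boundary of $\om_\infty^+$ equals $\pom_\infty^+$; by the third step this set has locally finite $H^n$ measure, so Federer's criterion shows $\om_\infty^+$ has locally finite perimeter, which simultaneously gives the asserted coincidence of the topological and measure-theoretic boundaries (and likewise for $\om_\infty^-$). Every constant produced above was tracked explicitly in terms of $n$, $M$, and the Ahlfors regularity constant of the $\om_i$. The step I expect to be the real obstacle is the upper Ahlfors regularity of the limit: it is precisely there that one must upgrade Hausdorff convergence of the sets to quantitative control of $H^n$ on the limiting boundary, via the weak-$*$ limit of the surface measures together with the density-comparison lemma. $\qed$
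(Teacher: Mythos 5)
Your argument is correct, and its skeleton --- Blaschke selection plus a diagonal argument for local Hausdorff convergence, persistence of the corkscrew balls in the limit, and the observation that rescaling by $1/r_i$ pushes the corkscrew scale $R_0/r_i$ to infinity --- is exactly the paper's, which for those steps simply cites \cite{kenigtoroannsci} and \cite{badgerengelsteintoro}. Where you genuinely diverge is in the measure-theoretic conclusions. The paper first obtains locally finite perimeter from BV compactness (the corkscrews force $\chi_{\om_i}\to\chi_{\om_\infty}$ in $L^1_{loc}$), reads off the upper Ahlfors bound from lower semicontinuity of the total variation together with the identification of the topological and measure-theoretic boundaries, and gets the lower bound directly from the corkscrews. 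You instead pass to a weak-$*$ limit $\mu$ of the surface measures $\sigma_i$, squeeze $\mu$ between the two Portmanteau inequalities (open balls from above, closed balls from below) to make it Ahlfors regular with support exactly $\pom_\infty^+$, invoke the standard density-comparison lemma to conclude $H^n|_{\pom_\infty^+}\approx\mu$, and only then deduce locally finite perimeter from Federer's criterion. Both routes are sound and yield constants depending only on $n$, $M$ and the AR constant. Yours correctly isolates the crux (Hausdorff convergence alone controls no measure) and avoids the BV machinery at this stage; the paper's route is shorter once one accepts that machinery, and the $L^1_{loc}$ convergence it rests on is needed again later anyway (it is invoked in the proof of Theorem \ref{nuBMOthrm.thrm} to get $\nu_i\,d\sigma_i\rightharpoonup\nu_\infty\,d\sigma_\infty$), so your approach does not ultimately dispense with it.
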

\begin{proof} 
That domains which satisfy a two-sided $(M, R_0)$-corkscrew condition are closed in the Hausdorff distance sense follows from arguing as in \cite{kenigtoroannsci}. For a proof with full details see \cite{badgerengelsteintoro} Appendix B (the proof there shows that two-sided NTA domains form a closed class, but truncating the proof after Step 5/2 gives a complete argument for two-sided corkscrew domains). Elementary change of variables shows that if $\Omega$ is a $(M, R_0)$-corkscrew domain, then $\Omega/\rho$ is a $(M, R_0/\rho)$-corkscrew domain. Since $r_i \downarrow 0$ it follows that for any $R_1$, arbitrarily large, that $\Omega_i$ is a two-sided $(M, R_1)$-corkscrew domain (for large $i$ depending on $R_1, R_0$). Therefore, $\Omega^\pm_\infty$ satisfies the corkscrew condition at all scales with constant $M$. 

The remaining conclusions follow exactly as in \cite{kenigtoroannsci}, Theorem 4.1. Briefly, that $\Omega$ is a set of locally finite perimeter follows from the compactness of BV functions (the corkscrew condition guarantees that $\chi_{\om_i}\rightarrow \chi_{\om_\infty}$ in $L^1_{\mathrm{loc}}$). That the topological boundary is the same as the measure theoretic boundary follows from the existence of corkscrews. The upper Ahlfors regularity follows from the lower-semi continuity of the surface measure and the lower Ahlfors regularity follows from the existence of corkscrew points. 
\end{proof}

With this lemma in hand, we can prove Theorem \ref{nuBMOthrm.thrm} by means of a compactness argument. Before continuing, we point out the primary difficulty is the lack of a ``Portmanteau theorem" for signed measures. Recall that if $\mu_i$ are positive measures with $\mu_i \rightharpoonup \mu$ and $\mu(\partial A) = 0$, then $\mu_i(A) \to \mu(A)$, but this is not true for signed measures (take $\mu_i = \delta_{1-1/n} - \delta_1$ on the real line and $A = (0,1)$ and note that the $\mu_i$ converge weakly to the zero measure). 

\begin{proof}[Proof of Theorem \ref{nuBMOthrm.thrm}]

We proceed by contradiction. If the theorem is false, then there is a $\delta>0 $ and a sequence of domains $\om^{(i)}$ with uniform control on the Ahlfors regularity and corkscrew constants such that $\lVert \tilde{\nu}_i \rVert_{BMO(d\tilde{\sigma}_i)} < 1/i$, (where $\tilde{\sigma}_i$ is the surface measure for $\om^{(i)}$ and $\tilde{\nu}_i$ is the outer unit normal to $\om^{(i)}$) but $\pom_i$ fails to be $(\delta, 1/i)-$Reifenberg flat. In particular, there are points $Q_i \in \pom^{(i)}$ and $r_i < 1/i$ such that $\Theta_{\om^{(i)}}(Q_i, r_i) \ge \delta$ for some $r_i < 1/i$. After a harmless translation, we assume that $Q_i \equiv 0$.

Apply Lemma \ref{KTlemma} to $\om_i := \tfrac{1}{r_i}\om^{(i)}$. Let $\sigma_i$ and $\nu_i$ be the surface measure and unit outer normal for $\om_i$ respectively. Similarly, let $\sigma_\infty$ and $\nu_\infty$ be the surface measure and unit outer normal for $\om_\infty$, the limiting domain obtained from Lemma \ref{KTlemma}.  Recall that our assumptions imply that $\Theta_{\om_{i}}(0, 1) > \delta$, which implies, through the triangle inequality, that $\Theta_{\om_\infty}(0, 1/2) > \delta/2$. We will show, in fact, that $\om_\infty$ is a half-space, which will provide the desired contradiction. 

Note that $0 \in \pom_i$ for all $i$ and 
$$\left| \fint_{B(0,1)} \nu_i \, d\sigma_i \right| \le \fint_{B(0,1)} |\nu_i| \, d\sigma_i = 1.$$
Therefore, there exists a subsequence, which we relabel and fix henceforth, such that
$$ \fint_{B(0,1)} \nu_i \, d\sigma_i \to \vec{N},$$
for some vector $\vec{N}$, with $|\vec{N}| \leq 1$. Our goal is to show that $\nu_\infty(P) \equiv \vec{N}$ for $d\sigma_\infty$-almost every point $P \in \partial \Omega_\infty$. Once we have done this it follows that $\Omega_\infty = \vec{N}^\perp$ and we have reached the desired contradiction (to see these details, follow the proof in \cite{evansandgariepy} page 202).

The first step is to show that $|\vec{N}| = 1$. Using that $\lVert \tilde{\nu_i} \rVert_{BMO(d\tilde{\sigma}_i)} \to 0$ and the triangle inequality we have 
\begin{equation}
\begin{split}
0 &= \limsup_{i \to \infty} \fint_{B(0, r_i)} \left|\tilde{\nu}_i(z)- \fint_{B(0,r_i)} \tilde{\nu}_i(x) \, d\tilde{\sigma}_i(x)\right| \, d\tilde{\sigma}_i(z)
\\ &= \limsup_{i \to \infty} \fint_{B(0, 1)} \left|{\nu}_i(z) -\fint_{B(0,1)} {\nu}_i(x) \, d{\sigma}_i(x)\right| \, d{\sigma}_i(z)
\\ &\ge \limsup_{i \to \infty} \fint_{B(0, 1)} \left|1 - \left|\fint_{B(0,1)} {\nu}_i(x) \, d{\sigma}_i(x)\right|\right| \, d{\sigma}_i(z)
\\ & = \lim_{i \to \infty} \left| 1 - \left| \fint_{B(0,1)} \nu_i \, d\sigma_i\right| \right| = |1 - |\vec{N}||.
\end{split}
\end{equation}
Our next step is to prove the equivalence of limits of averages on the boundary; let $R > \rho > 0$, then by the uniform Ahlfors-regularity of the $\pom_i$, 
\begin{equation}\label{limat0.eq}
\begin{split}
&\limsup_{i\to \infty}\left| \fint_{B(0,\rho)} \nu_i(z) \, d\sigma_i(z) - \fint_{B(0,R)} \nu_i(x) \, d\sigma_i(x)\right|
\\& \quad \le \limsup_{i \to \infty} \fint_{B(0,\rho)} \left| \nu_i(z) - \fint_{B(0,R)} \nu_i(x) \, d\sigma(x) \right| \, d\sigma_i(z)
\\& \quad \le  \limsup_{i \to \infty}C\left(\frac{R}{\rho}\right)^{n}  \fint_{B(0,R)} \left| \nu_i(z) - \fint_{B(0,R)} \nu_i(x) \, d\sigma(x) \right| \, d\sigma_i(z)
\\& \quad \le \limsup_{i \to \infty}C\left(\frac{R}{\rho}\right)^{n}  \fint_{B(0,Rr_i)} \left| \tilde{\nu}_i(z) - \fint_{B(0,Rr_i)} \tilde{\nu}_i(x) \, d\tilde{\sigma}(x) \right| \, d\tilde{\sigma}_i(z) = 0
\end{split}
\end{equation}
where we used $\lVert \tilde{\nu_i} \rVert_{BMO(d\tilde{\sigma}_i)} \to 0$ in the last line. Similarly, if $P \in \pom_\infty$ and $\rho > 0$, then, setting $R = 10(\rho + |P|)$, we have 
\begin{equation}\label{limatp.eq}
\begin{split}
&\limsup_{i\to \infty}\left| \fint_{B(P,\rho)} \nu_i(z) \, d\sigma_i(z) - \fint_{B(0,R)} \nu_i(x)\, d\sigma_i(x)\right|
\\& \quad \le \limsup_{i \to \infty} \fint_{B(P,\rho)} \left| \nu_i(z) - \fint_{B(0,R)} \nu_i(x) \, d\sigma(x) \right| \, d\sigma_i(z)
\\& \quad \le  \limsup_{i \to \infty}C\left(\frac{R}{\rho}\right)^{n}  \fint_{B(0,R)} \left| \nu_i(z) - \fint_{B(0,R)} \nu_i(x) \, d\sigma(x) \right| \, d\sigma_i(z)
\\& \quad \le \limsup_{i \to \infty}C\left(\frac{R}{\rho}\right)^{n}  \fint_{B(0,Rr_i)} \left| \tilde{\nu}_i(z) - \fint_{B(0,Rr_i)} \tilde{\nu}_i(x) \, d\tilde{\sigma}(x) \right| \, d\tilde{\sigma}_i(z) = 0.
\end{split}
\end{equation}
Using \eqref{limat0.eq}, \eqref{limatp.eq} and
$$\lim_{i \to \infty} \fint_{B(0,1)} \nu_i \, d\sigma_i = \vec{N}$$
we have established that
\begin{equation}\label{alllimeq.eq}
\lim_{i \to \infty} \fint_{B(P,R)} \nu_i \, d\sigma_i = \lim_{i \to \infty} \fint_{B(0,1)} \nu_i \, d\sigma_i = \vec{N},
\end{equation}
for all $P \in \pom_\infty$ and $R > 0$. 

We are now ready to prove that $\nu_\infty(P) = \vec{N}$ for every $P \in \pom_\infty^*$ (where,  $\pom_\infty^*$ is the reduced boundary of $\om_\infty$, see \cite[Chapter 5]{evansandgariepy}). We collect several facts abound sets of locally finite perimeter many of which can be found in \cite[Chapter 5]{evansandgariepy}.  Recall, that if $P\in \partial^*\om_\infty$, then 
$$\lim_{R \downarrow 0^+} \fint_{B(P,R)} \nu_\infty \, d\sigma_\infty = \nu_\infty(P)$$
and $|\nu_\infty(P)| = 1$. Also $\sigma_\infty(\partial_*\Omega_\infty \setminus \partial^*\om_\infty) = 0$, where $\partial_*\om_\infty$ is the measure theoretic boundary. Since our domain has corkscrew points, $\partial_*\om_\infty = \pom_\infty$ and $\sigma_\infty$-almost every $P \in \partial \Omega$ is in $\partial^*\om_\infty$.

Let $0 < \epsilon < 1$ and $P \in \partial^*\om_\infty$. Choose $R > 0$ so that $\left|\nu_\infty(P) - \fint_{B(P,R)}\nu_\infty(Q) d\sigma_\infty(Q)\right| < \epsilon$ and $\sigma_\infty(\partial B(P,R)) = 0$. Such an $R$ exists by the definition of $\partial^*\om_\infty$ and the fact that $\sigma_\infty(\partial B(P,R)) = 0$ for all but countably many $R > 0$ (this follows from the Ahlfors regularity of $\sigma_\infty$). Let $\varphi \equiv \vec{N}\xi$ where $\xi \in C_c(\ree)$ with $\chi_{B(P,R)} \le \xi \le \chi_{B(P,2R)}$ and $\lVert \xi - \chi_{B(P,R)}\rVert_{L^2(d\sigma_\infty)} < \epsilon\sigma_\infty(B(P,R))^{1/2}$ (the existence of $\xi$, whenever $\sigma_\infty(\partial B(P, R)) =0$, follows from the continuity of $\sigma_\infty$ at $B(P,R)$ and Urysohn's lemma).

We establish some simple bounds; in what follows $C$ is a constant that only depends on the (uniform) Alfhors-regularity bounds on $\om_i$ and $\om_\infty$. First, we see that by the $L^2$ bound for $\xi - \chi_{B(P,R)}$ we have
\begin{equation}\label{eq101.eq}
\begin{split}
\left| \int \varphi\cdot \nu_\infty \, d\sigma_\infty - \vec{N}\cdot \int_{B(P,R)}\nu_\infty \, d\sigma_\infty\right| &\le \sigma_\infty(B(P,2R))^{\frac{1}{2}} \lVert \xi - \chi_{B(P,R)}\rVert_{L^2(d\sigma_\infty)} 
\\& < C\epsilon \sigma_\infty(B(P,R))
\end{split}
\end{equation}
where we used the Ahlfors-regularity of $\sigma_\infty$ in the last inequality. 

Also, since $\left|\nu_\infty(P) - \fint_{B(P,R)}\nu_\infty(Q) d\sigma_\infty(Q)\right| < \epsilon$ we may conclude from \eqref{eq101.eq}
\begin{equation}\label{eq102.eq}
\begin{split}
\left| \int \varphi\cdot \nu_i \, d\sigma_i - \sigma_\infty(B(P,R)) \vec{N}\cdot \nu_\infty \right| & < (C + 1)\epsilon \sigma_\infty(B(P,R))
\\& \le C\epsilon \sigma_\infty(B(P,R)).
\end{split}
\end{equation}
Next, by Lemma \ref{KTlemma} (in particular the fact that $\chi_{\om_i}\rightarrow \chi_{\om_\infty}$ in $L^1_{\mathrm{loc}}$), we have that $\nu_i d\sigma_i \rightharpoonup \nu_\infty \sigma_\infty$; to see this we need only note that $[d\chi_{\Omega_i}] = \nu_i d\sigma_i$ and $[d\chi_{\Omega_\infty}] = \nu_\infty d\sigma_\infty$ since the measure theoretic boundaries of the sets $\om_i$ and $\om_\infty$ coincide with the topological boundaries of $\om_i$ and $\om_\infty$ respectively (see \cite[Chapter 5]{evansandgariepy}). Therefore, there exists an $i_0$ (which depends on $\varphi$) such that for all $i \ge i_0$
\begin{equation}\label{eq103.eq}
\left| \int \varphi\cdot \nu_i \, d\sigma_i - \int \varphi\cdot \nu_\infty \, d\sigma_\infty\right| < \epsilon \sigma_\infty(B(P,R)).
\end{equation}
We claim that, perhaps after adjusting $i_0$, we have for all $i \ge i_0$,
\begin{equation}\label{eq104.eq}
\int \varphi \cdot \nu_i d\sigma_i + \epsilon \sigma_i(B(P,R)) > \vec{N} \cdot \int_{B(P,R)} \nu_i \, d\sigma_i,
\end{equation}
or, equivalently,
\begin{equation}\label{eq105.eq}
\int_{B(P,2R)\setminus B(P,R)} \xi \vec{N}\cdot \nu_i \, d\sigma_i + \epsilon \sigma_i(B(P,R)) > 0,
\end{equation}
where we used that $\chi_{B(P,R)} \le \xi \le \chi_{B(P,2R)}$.
Indeed, we have a stronger statement, there exists $i_0$ such that for all $i \ge i_0$,
\begin{equation}\label{eq106.eq}
\sigma_i(\{z \in B(P,2R): \nu_i(z) \cdot \vec{N} < 0\}) < \frac{\epsilon}{2}\sigma_i(B(P,R)).
\end{equation}
To prove \eqref{eq106.eq} we have that $|\nu_i| < 1$ $\sigma_i$ a.e. so that $1 - \vec{N}\cdot \nu_i \ge 0$, $\sigma_i$-a.e.,  and by \eqref{alllimeq.eq} if $i$ is sufficiently large
\begin{equation}\label{eq107.eq}
\left|\fint_{B(P,2R)} \vec{N}\cdot \nu_i \, d\sigma_i - 1 \right| < c \epsilon,
\end{equation}
where $c$ is a small constant depending on the uniform Ahlfors regularity to be chosen momentarily. Since $1 - \vec{N}\cdot \nu_i \ge 0$, $\sigma_i$-a.e., \eqref{eq107.eq} implies 
$$\left|\fint_{B(P,2R)} 1 -  \vec{N}\cdot\nu_i \, d\sigma_i \right| < c \epsilon,$$
so that by Chebyshev's inequality 
$$\sigma_i(\{z \in B(P,2R): 1 - \vec{N} \cdot \nu_i(z) > 1\}) \le \sigma_i(B(P,2R))c\epsilon < \frac{\epsilon}{2} \sigma_i(B(P,R)),$$
by choice of $c$. This clearly implies \eqref{eq106.eq}. Now we make the observation that
\begin{equation*}
\begin{split}
\limsup_{i \to \infty} \sigma_i(B(P,R)) &= \limsup_{i \to \infty} \sigma_i(B(P,R)) \vec{N}\cdot\fint_{B(P,R)} \nu_i \, d\sigma_i
\\&= \limsup_{i \to \infty} \vec{N} \cdot \int_{B(P,R)} \nu_i \, d\sigma_i. 
\end{split}
\end{equation*}
Thus, combining \eqref{eq104.eq}, \eqref{eq103.eq} and \eqref{eq101.eq}
we obtain 
\begin{equation}\label{eq108.eq} 
\begin{split}
(1 - \epsilon) \limsup_{i \to \infty} \sigma_i(B(P,R)) &< \limsup_{i \to \infty} \int \varphi\cdot \nu_i \, d\sigma_i
\\ &\le \int \varphi \cdot \nu_\infty \, d\sigma_\infty + \epsilon\sigma_\infty(B(P,R))
\\ & \le \sigma_\infty(B(P,R))(\vec{N} \cdot \nu_\infty(P) + C\epsilon).
\end{split}
\end{equation}
By the lower semicontinuity of the total variation of $BV$ functions we have 
$$\sigma_\infty(B(P,R)) \le \liminf_{i \to \infty} \sigma_i(B(P,R)) \le  \limsup_{i \to \infty} \sigma_i(B(P,R))$$
so that \eqref{eq108.eq} implies
$$\frac{\vec{N} \cdot \nu_\infty(P) + C \epsilon}{1 - \epsilon} > 1$$
for any $\epsilon \in (0,1)$. It follows that $\nu_\infty(P) = \vec{N}$.
\end{proof}

\begin{remark} It is interesting to note (particularly in the setting of Corollary \ref{nuVMOthrm.thrm}) that using the same analysis above one can show $\sigma_i \rightharpoonup \sigma_\infty$. Recall that we had already shown $\nu_\infty \equiv \vec{N}$, taking $f \in C_c(\rn)$ with $\supp f \subset B(0,R)$ we show
\begin{equation}\label{rmksurfmsrconv.eq}
\lim_{i \to \infty} \left| \int (f \nu_i)\cdot \nu_i \, d\sigma_i - \int (f \vec{N})\cdot\vec{N} \, d\sigma_\infty \right| = 0
\end{equation}
Since $f$ is arbitrary and $|\nu_i| = 1$, $\sigma_i$-a.e., this will show $\sigma_i \rightharpoonup \sigma_\infty$. Using the triangle inequality we have that the left hand side of \eqref{rmksurfmsrconv.eq} is bounded by 
\begin{equation}
\begin{split}
&\limsup_{i \to \infty} \left| \int (f \nu_i)\cdot \nu_i \, d\sigma_i - \int (f \vec{N})\cdot \nu_i \, d\sigma_i \right|
\\ & \qquad + \limsup_{i \to \infty} \left| \int (f \vec{N})\cdot \nu_i \, d\sigma_i  - \int (f \vec{N})\cdot\vec{N} \, d\sigma_\infty \right|
\\ & \quad = \limsup_{i \to \infty} \left| \int (f \nu_i)\cdot \nu_i \, d\sigma_i - \int (f \vec{N})\cdot \nu_i \, d\sigma_i \right|
\\ & \quad \le \limsup_{i \to \infty} \sigma_i(B(0,R)\lVert f \rVert_\infty \fint_{B(0,R)} (1 - \vec{N}\cdot \nu_i) \, d\sigma_i  = 0
\end{split}
\end{equation}
where the equality comes from the weak convergence $\nu_i d\sigma_i \rightharpoonup \nu_\infty d\sigma_\infty$ and in the last line we used \eqref{alllimeq.eq}, the uniform Ahlfors regularity property and that  $1 - \vec{N}\cdot \nu_i \ge 0$, $\sigma_i$-a.e.
\end{remark}

\begin{remark}\label{scalingconstt1.eq} We remark that in  Theorem \ref{nuBMOthrm.thrm} the constant $c_\delta$ does not depend on $R_0$ in the two sided corkscrew condition while $r_\delta$ does.  For this reason we may often reduce (by scaling) to the case $R_0 = 1$. We also notice that control on the oscillation of $\nu_i$ centered around $Q_i \in \pom_i$ gives control on the flatness of $\pom_i$ around $Q_i$. We illustrate these observations with the following two refinements/Corollaries of Theorem \ref{nuBMOthrm.thrm}.
\end{remark}


\begin{corollary}[A Pointwise Version of Theorem \ref{nuBMOthrm.thrm}]\label{cor:locally}Suppose $\om \subset \ree$ is an open set satisfying the $(M,R_0)$-two-sided corkscrew condition whose boundary is Ahlfors regular.
Define for $Q \in \pom$ and $R > 0$
\begin{equation}\label{eqn:localizedosc}\lVert \nu \rVert_{\ast}(Q, R) := \sup_{0 < s < R} \left( \fint_{B(Q,s)} \left| \nu(z) - \fint_{B(Q,s)} \nu(y) \, d\sigma(y) \right|^2 \, d \sigma(z) \right)^\frac{1}{2}.\end{equation}
Then for every $\delta > 0$ there exists $c_\delta$ and $r_\delta$ depending only on $\delta$, the Ahlfors regularity constant and the constants in the two-sided corkscrew condition such that if  
$$\lVert \nu \rVert_{\ast}(Q, 1 )< c_\delta,$$ 
then 
$$\sup_{r \in (0, r_\delta)} \Theta(Q,r) < \delta.$$
\end{corollary}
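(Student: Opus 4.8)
The plan is to rerun, essentially verbatim, the compactness-and-contradiction argument that proves Theorem \ref{nuBMOthrm.thrm}, after recording one elementary observation: that proof never uses the global $BMO$ smallness of $\tilde\nu_i$, only the vanishing of the oscillation of $\tilde\nu_i$ on balls \emph{centered at the blow-up point} $Q_i$. Indeed, every place where $\lVert\tilde\nu_i\rVert_{BMO(d\tilde\sigma_i)}\to 0$ is invoked in that proof --- the step giving $|\vec N|=1$, the estimates \eqref{limat0.eq} and \eqref{limatp.eq} (each of which, after the initial triangle-inequality step and the Ahlfors-regular enlargement of the integration domain, is reduced to the oscillation of $\tilde\nu_i$ over the concentric ball $B(0,Rr_i)$), and \eqref{eq107.eq} (a consequence of \eqref{alllimeq.eq}, hence of \eqref{limat0.eq}--\eqref{limatp.eq}) --- requires only that $\fint_{B(0,\rho)}\bigl|\tilde\nu_i-\fint_{B(0,\rho)}\tilde\nu_i\,d\tilde\sigma_i\bigr|\,d\tilde\sigma_i\to 0$ as $\rho\downarrow 0$, with all balls centered at $Q_i=0$. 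This is exactly what the hypothesis $\lVert\nu\rVert_\ast(Q,1)<c_\delta$ supplies after blow-up.

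Concretely, suppose the corollary fails for some $\delta>0$. Then for each $i\in\N$ there is an open set $\om^{(i)}\subset\ree$, with Ahlfors-regularity and two-sided corkscrew constants bounded uniformly in $i$, surface measure $\tilde\sigma_i$ and outer unit normal $\tilde\nu_i$, a point $Q_i\in\pom^{(i)}$ for which the quantity in \eqref{eqn:localizedosc} with center $Q_i$ and radius $1$ is less than $1/i$, and a radius $r_i<1/i$ with $\Theta_{\om^{(i)}}(Q_i,r_i)\ge\delta$. Translate $Q_i$ to the origin and, invoking Remark \ref{scalingconstt1.eq}, rescale so that the corkscrew scale becomes $R_0=1$; then set $\om_i:=r_i^{-1}\om^{(i)}$ with surface measure $\sigma_i$ and outer normal $\nu_i$, exactly as in the proof of Theorem \ref{nuBMOthrm.thrm}. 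By construction $0\in\pom_i$, $\Theta_{\om_i}(0,1)\ge\delta$, and, by the scaling invariance of \eqref{eqn:localizedosc},
\[
\Bigl(\fint_{B(0,\rho)}\Bigl|\nu_i-\fint_{B(0,\rho)}\nu_i\,d\sigma_i\Bigr|^2 d\sigma_i\Bigr)^{1/2}<\tfrac1i\qquad\text{for every }0<\rho<r_i^{-1}.
\]
Since $r_i\downarrow 0$, for each fixed $R>0$ the last bound holds for all $\rho\le R$ once $i$ is large, and by Jensen's inequality it holds with the $L^2$ average replaced by the $L^1$ average. These are precisely the quantities consumed in the proof of Theorem \ref{nuBMOthrm.thrm}. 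Hence, applying Lemma \ref{KTlemma} to pass to a subsequence with $\om_i^\pm\to\om_\infty^\pm$ and $\pom_i\to\pom_\infty$ in the local Hausdorff distance, noting that $\Theta_{\om_\infty}(0,1/2)\ge\delta/2$ by the triangle inequality, and then following the proof of Theorem \ref{nuBMOthrm.thrm} line for line (extract $\fint_{B(0,1)}\nu_i\,d\sigma_i\to\vec N$; show $|\vec N|=1$; derive \eqref{alllimeq.eq}; use the weak convergence $\nu_i\,d\sigma_i\rightharpoonup\nu_\infty\,d\sigma_\infty$ together with \eqref{eq107.eq}, Chebyshev's inequality, and lower semicontinuity of the perimeter), we obtain $\nu_\infty\equiv\vec N$ on $\partial^*\om_\infty$. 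Therefore $\pom_\infty$ is the hyperplane $\vec N^\perp$, contradicting $\Theta_{\om_\infty}(0,1/2)\ge\delta/2>0$.

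The only genuinely new point is the observation in the first paragraph, namely that no hidden use of the oscillation of $\tilde\nu_i$ on off-center balls occurs in the proof of Theorem \ref{nuBMOthrm.thrm}. Verifying this is a matter of rereading the initial estimate in \eqref{limat0.eq}, \eqref{limatp.eq} and the passage from \eqref{eq107.eq} to \eqref{eq106.eq}, and confirming that they genuinely localize at $Q_i=0$ --- they do, since the only nonconcentric ball appearing, $B(P,\rho)$ with $P\in\pom_\infty$, is at once absorbed into the concentric ball $B(0,10(\rho+|P|))$ via Ahlfors regularity. The remaining discrepancies --- $L^2$ versus $L^1$ averages (absorbed by Jensen) and the fact that $\lVert\nu\rVert_\ast(Q,1)$ controls only scales below $1$ (harmless, since $Rr_i<1$ for $i$ large) --- are routine. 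I expect this verification to be the only, and very minor, obstacle; modulo it there is nothing further to prove.
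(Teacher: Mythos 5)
Your proposal is correct and is essentially identical to the paper's proof: the paper likewise runs the compactness argument of Theorem \ref{nuBMOthrm.thrm} verbatim after observing that the whole analysis hinges only on $\limsup_{i}\fint_{B(0,Rr_i)}|\tilde\nu_i-\fint_{B(0,Rr_i)}\tilde\nu_i\,d\tilde\sigma_i|\,d\tilde\sigma_i=0$, which the hypothesis $\lVert\nu\rVert_\ast(Q_i,1)<1/i$ supplies once $Rr_i<1$, i.e.\ for $i$ large. Your additional checks (Jensen for $L^1$ versus $L^2$, absorbing the off-center ball $B(P,\rho)$ into $B(0,10(\rho+|P|))$) are exactly the points the paper implicitly relies on.
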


\begin{proof}
The proof is exactly the same as Theorem \ref{nuBMOthrm.thrm}. 
Proceeding in a way as to obtain a contradiction, we suppose there exist domains $\om^{(i)}$  satisfying the hypotheses above and $Q_i \in \om^{(i)}$ with $\lVert \tilde{\nu}_i \rVert_{\ast}(Q_i, 1 ) <1/i$  but $\Theta(Q_i, r_i) \ge \delta$ for some $r_i < 1/i$ (here $\tilde{\nu}_i$ is the unit outer normal to $\om^{(i)}$ and the norm $\|\cdot \|_*$ is taken with respect to $\tilde{\sigma}_i$ to surface measure to $\om^{(i)}$). Again, without loss of generality we may assume $Q_i \equiv 0$. We then proceed exactly as in Theorem \ref{nuBMOthrm.thrm} noting that the analysis hinges on the following estimate for all $R > 0$, 
$$\limsup_{i \to \infty} \fint_{B(0,Rr_i)} \left| \tilde{\nu}_i(z) - \fint_{B(0,Rr_i)} \tilde{\nu}_i(x) \, d\tilde{\sigma}(x) \right| \, d\tilde{\sigma}_i(z) = 0.$$
The expression indexed by $i$ in the limit superior is less than $1/i$ provided that $Rr_i < 1$, which always occurs for $i$ large enough.

\end{proof}

Finally, we have that the unit normal in $\mathrm{VMO}$ implies vanishing Reifenberg flatness. 
\begin{corollary}\label{nuVMOthrm.thrm}Suppose $\om \subset \ree$ is a open set satisfying the $(M,R_0)$-two-sided corkscrew condition whose boundary is Ahlfors regular. If $\nu \in VMO_{loc}(d\sigma)$
where $\sigma$ is the surface measure for $\om$ and $\nu$ is the outer unit normal to $\om$, then for every compact set $K \subset \ree$
$$\lim_{r \to 0} \sup_{Q \in \pom \cap K} \Theta(Q,r) = 0,$$
where $\Theta(Q,r)$ is associated to the set $\pom$.
\end{corollary}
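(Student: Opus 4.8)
The plan is to deduce vanishing Reifenberg flatness from the pointwise estimate of Corollary~\ref{cor:locally} by a rescaling argument. Fix a compact set $K\subset\ree$ and $\delta>0$, and let $c_\delta,r_\delta$ be the constants furnished by Corollary~\ref{cor:locally}; by Remark~\ref{scalingconstt1.eq} we may and do take $c_\delta$ independent of the corkscrew scale $R_0$, which will be essential below. The $\mathrm{VMO}_{loc}$ hypothesis only gives smallness of the averaged oscillation of $\nu$ at a single small scale, whereas the hypothesis of Corollary~\ref{cor:locally} involves the supremum over all scales $\lVert\nu\rVert_{\ast}(Q,\,\cdot\,)$, so the first step is to bridge this gap. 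Let $K'=\{x:\dist(x,K)\le 1\}$ and put $\eta(s)=\sup_{Q\in\pom\cap K'}\big(\fint_{B(Q,s)}\lvert\nu-\fint_{B(Q,s)}\nu\rvert^2\,d\sigma\big)^{1/2}$, which tends to $0$ as $s\to 0$ by the definition of $\mathrm{VMO}_{loc}$. Since the variance identity gives $\fint_{B(Q,s)}\lvert\nu-\fint_{B(Q,s)}\nu\rvert^2\,d\sigma=1-\lvert\fint_{B(Q,s)}\nu\rvert^2\le 1$, the nondecreasing envelope $\widetilde\eta(s):=\sup_{0<t\le s}\eta(t)$ is finite and still tends to $0$ as $s\to 0$; moreover, for $Q\in\pom\cap K$ and $\rho\le 1$ every ball $B(Q,t)$ with $t\le\rho$ lies in $K'$, so $\lVert\nu\rVert_{\ast}(Q,\rho)\le\widetilde\eta(\rho)$.

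Next I would fix $\rho\in(0,\min(R_0,1))$ small enough that $\widetilde\eta(\rho)<c_\delta$, so that $\lVert\nu\rVert_{\ast}(Q,\rho)<c_\delta$ for every $Q\in\pom\cap K$, and apply Corollary~\ref{cor:locally} to the dilated open set $\Omega/\rho$. This set satisfies the $(M,R_0/\rho)$-two-sided corkscrew condition with $R_0/\rho>1$, has the same Ahlfors-regularity constant as $\Omega$, and its outer unit normal satisfies $\nu_{\Omega/\rho}(x/\rho)=\nu_\Omega(x)$, so that $\lVert\nu_{\Omega/\rho}\rVert_{\ast}(Q/\rho,1)=\lVert\nu_\Omega\rVert_{\ast}(Q,\rho)<c_\delta$. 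Since $c_\delta$ does not depend on the corkscrew scale, Corollary~\ref{cor:locally} applies with this same $c_\delta$ and produces a radius $r_\delta'>0$ (depending on $\rho$, hence ultimately only on $\delta$ and the structural constants) with $\sup_{r\in(0,r_\delta')}\Theta_{\Omega/\rho}(Q/\rho,r)<\delta$; rescaling and using the scale-invariance of $\Theta$ yields $\sup_{r\in(0,\rho r_\delta')}\Theta_\Omega(Q,r)<\delta$ for every $Q\in\pom\cap K$. Therefore $\sup_{r\in(0,\rho r_\delta')}\sup_{Q\in\pom\cap K}\Theta(Q,r)\le\delta$, hence $\limsup_{r\to 0}\sup_{Q\in\pom\cap K}\Theta(Q,r)\le\delta$, and letting $\delta\downarrow 0$ completes the proof.

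There is no serious difficulty once Corollary~\ref{cor:locally} is available; the only points requiring care are the two bookkeeping steps above: upgrading single-scale $\mathrm{VMO}_{loc}$ control to the supremum-over-scales quantity $\lVert\nu\rVert_{\ast}$ (where the a priori bound $\fint\lvert\nu-\fint\nu\rvert^2\le 1$ is what keeps the envelope $\widetilde\eta$ well behaved), and tracking the dilation so that the ``unit scale'' appearing in Corollary~\ref{cor:locally} corresponds to the genuinely small scale $\rho$ at which $\mathrm{VMO}_{loc}$ supplies smallness---and it is precisely here that the $R_0$-independence of $c_\delta$ recorded in Remark~\ref{scalingconstt1.eq} is needed, to prevent a circularity in the choice of $\rho$. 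Alternatively, one can bypass Corollary~\ref{cor:locally} and re-run the compactness argument proving Theorem~\ref{nuBMOthrm.thrm} almost verbatim: were the conclusion false, there would be $Q_i\in\pom\cap K$ and $r_i\downarrow 0$ with $\Theta(Q_i,r_i)\ge\delta$, and blowing up $\tfrac{1}{r_i}(\Omega-Q_i)$ exactly as in that proof, the rescaled unit normals have oscillation tending to $0$ on each fixed ball $B(0,R)$ (because the $Q_i$ remain in $K$ and the relevant scales $r_iR$ shrink to $0$, so $\mathrm{VMO}_{loc}$ applies), forcing the blow-up limit to be a half-space---whose flatness functional $\Theta$ vanishes identically---and contradicting $\Theta_{\Omega_\infty}(0,1/2)\ge\delta/2$.
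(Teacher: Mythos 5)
Your proof is correct and follows essentially the same route as the paper: both deduce the result from Corollary \ref{cor:locally} by choosing a scale $s$ at which the $\mathrm{VMO}_{loc}$ hypothesis gives $\sup_{Q\in\pom\cap K}\lVert\nu\rVert_{\ast}(Q,s)<c_\delta$, dilating by $1/s$, and rescaling the resulting flatness bound, using that $c_\delta$ is independent of the corkscrew scale. Your extra care in passing from single-scale smallness to the sup-over-scales quantity $\lVert\nu\rVert_{\ast}$ (the monotone envelope) is a point the paper elides but which is immediate from the definition of the limit, and your alternative compactness argument is also valid but unnecessary.
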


\begin{proof}
Fix a compact set $K$ and $\delta > 0$. Using the definition of $VMO_{loc}$ we let $s > 0$ be such that 
$$\sup_{Q \in K}\lVert \nu \rVert_{\ast}(Q,s )< c_\delta,$$
where $c_\delta$ is as in Corollary \ref{cor:locally}.
Fix $Q \in K$. Set $\widetilde{\om} := \frac{\om - Q}{s}$ and let $\tilde{\nu}$ and $\tilde{\sigma}$ be the unit normal and surface measure to $\widetilde{\om}$ respectively. It follows from a change of variables that $\lVert \tilde{\nu} \rVert_{\ast}(Q,1 )< c_\delta$ and hence by Corollary \ref{cor:locally} we have 
$$\sup_{r \in (0, r_\delta)} \theta_{\widetilde{\om}} (0, r) < \delta,$$
where $\theta_{\widetilde{\om}}$ is associated to $\partial\widetilde{\om}$. Dilating and translating we have that 
$$\sup_{r \in (0, sr_\delta)} \theta (Q, r) < \delta.$$
Since $sr_\delta$ is uniform over $Q\in K$ we have shown $\limsup_{r \to 0} \sup_{Q \in \pom \cap K} \Theta(Q,r) < \delta$. As $\delta > 0$ is arbitrarily small, the corollary follows.
\end{proof}

\subsection{Applications to Chord-Arc Domains with Small Constant}\label{sec:applicationstocad}

Let us recall the definition of a Chord Arc Domain with Small Constant (or $\delta$-Chord Arc Domain) introduced by Kenig and Toro (see, e.g. Definitions 1.10 and 1.11 in \cite{kenigtoroannsci}). 

\begin{definition}[$\delta-$Chord Arc Domain and Vanishing Chord Arc Domain]\label{def:cadsc} We say a domain, $\om \subset \ree$, is a $\delta-$chord arc domain (or chord arc domain with small constant) if $\om$ is a $\delta-$Reifenberg flat chord arc domain and for each compact set $K \subset \ree$ there exists a $R >0$ such that
\begin{equation*}
\sup_{Q \in \pom \cap K} \lVert \nu \rVert_{*}(Q,R) < \delta,
\end{equation*}
where $\nu$ is the unit outer normal to the boundary and the norm $\|\cdot\|_*$ (recall \eqref{eqn:localizedosc}) is with respect to $\sigma$, the surface measure of $\pom$.

We say a domain $\om$ is a chord arc domain with vanishing constant if it is a chord arc domain with small constant and for each compact set $K \subset \ree$ 
\begin{equation}\label{VMOnuVCADdef.eq}
\lim_{r \to 0}\sup_{Q \in \pom \cap K} \lVert \nu \rVert_*(Q,r)  = 0.
\end{equation}
That is to say, $\nu \in \mathrm{VMO}_{loc}(d\sigma)$. 
\end{definition}

 A consequence of our Theorem \ref{nuBMOthrm.thrm} and Corollary \ref{nuVMOthrm.thrm} is that the {\it a priori} assumption of Reifenberg flatness (assuming the presence of corkscrews) in Definition \ref{def:cadsc} is redundant. We should remark that ``quantitatively" this is not precisely true; Theorem \ref{nuBMOthrm.thrm} does not guarantee that a chord arc domain with $\|\nu\|_{\mathrm{BMO}} < \delta$ is a $\delta$-chord arc domain, only that it is a $\varepsilon$-chord arc domain for some $\varepsilon = \varepsilon(\delta) > 0$. However, for most applications the precise value of $\delta > 0$ is not particularly important (see, e.g. Corollary 5.2 in \cite{kenigtoroduke}). 

However, before we can continue we make a quick remark as to the complications which arise when we consider unbounded chord arc domains. 

\begin{remark}\label{unboundedspellstrouble}
When working with unbounded domains, there is an issue that $\delta$-Reifenberg flatness of $\partial \Omega$ and $\nu \in \mathrm{VMO}_{loc}$ are conditions which hold at scales that are merely uniform over compacta, whereas the NTA conditions are required to hold at uniform scales throughout the domain (see Example \ref{baddomain.ex} below for a potentially problematic domain). Therefore, in order to ensure the unbounded domains in the results below are   NTA, we must assume, {\it a priori}, some global flatness (or smallness of $\mathrm{BMO}$ norm). We note that the need for global control is also why $(\delta, R)$-Reifenberg flatness is included in the definition of unbounded Reifenberg flat domains.

\end{remark}

The following example illustrates the issues discussed in Remark \ref{unboundedspellstrouble}. The example is a modification of an example provided to the first author by Steve Hofmann while working on \cite{bortzhofmann}.

\begin{example}\label{baddomain.ex} For $n \ge 2$, there exists a domain $\om \subset \mathbb{R}^n$ with the following properties:
\begin{enumerate}
\item $\pom$ is AR,
\item $\om_{ext} =\interior(\om^c)$ is non-empty and consists of one connected component,
\item $\om$ satisfies the two-sided corkscrew condition,
\item the outer normal to $\om$, $\nu$, satisfies $\nu \in VMO_{loc}$,
\item $\pom$ is vanishing Reifenberg flat and
\item $\om$ fails the $(C,R)$-Harnack chain condition for any $C, R > 0$.
\end{enumerate}
\end{example}
We describe the domain now and leave it to the reader to verify properties (1) - (6).
Given $s \in (0,1)$ we let $\Gamma_s \subset \mathbb{R}^2$ be the curve pictured below.
\squeezeup 
\begin{center}
\begin{figure}[h!]
\includegraphics{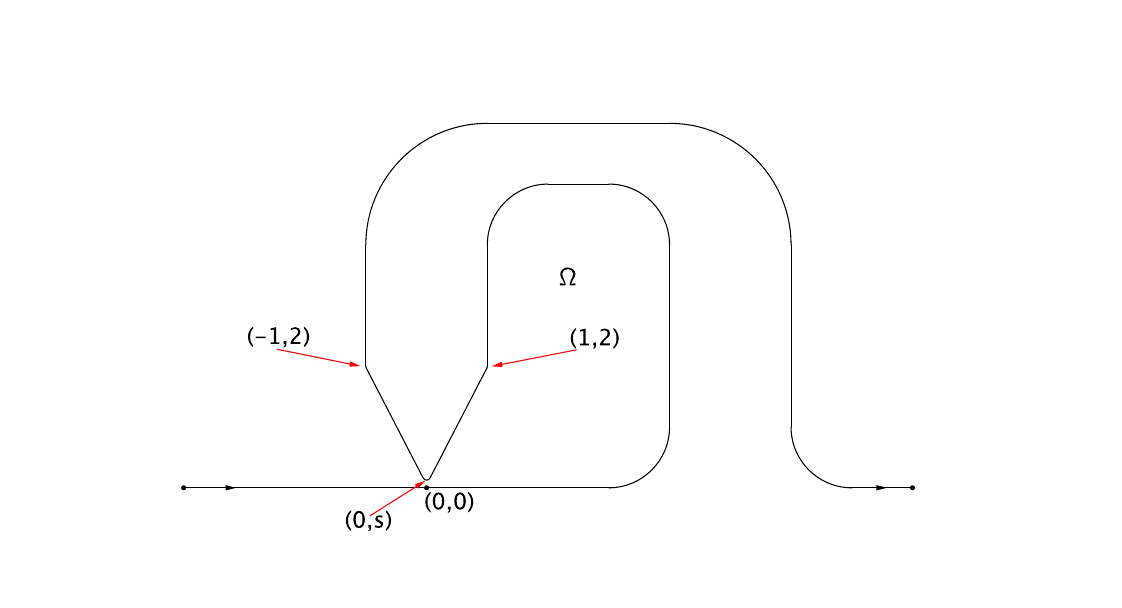}
\end{figure}
\end{center}

Here the only portion of $\Gamma_s$ that depends on $s$ is the the curve, $\C_s$, connecting $(-1,2)$ to $(1,2)$, the rest of the curve is made up of (horizontal and vertical) line segments and quarter-circle arcs. We give a brief description of $\C_s$ so the reader may verify properties (1) - (5). $\C_s$ is a ``smoothed" version of the curve given by the line segment from $(1,2)$ to $(0,s)$ followed by the line segment from $(0,s)$ to $(-1,2)$. We construct $\C_s$ from circular arcs from $\partial B((1-\tfrac{s}{2}, 2), \tfrac{s}{2}),\  \partial B((0,\tfrac{3s}{2}), \tfrac{s}{2})$ and $\partial B((-1 + \tfrac{s}{2}, 2),\tfrac{s}{2})$, and two line segments, one segment parallel to the the line through $(1-\tfrac{s}{2}, 2)$ and $(0,\tfrac{3s}{2})$, and the other parallel to the line through $(-1 + \tfrac{s}{2}, 2)$ and $(0,\tfrac{3s}{2})$ in such a way that $\Gamma_s$ is $C^1$. To ensure corkscrew points for $\om$, is it important to see that $\C_s$ stays above $y = |x|$. 

Now, we construct $\om$. We use the convention that $\C \cup \C'$ means $\C$ followed by $\C'$, where we attach the beginning point of $\C'$ to the endpoint of $\C$ and allow our curves to be defined up to translation. Let $\Gamma$ be the curve of infinite length given by 
$$\Gamma :=  \ \ldots \cup \Gamma_{1/4} \cup \Gamma_{1/3} \cup \Gamma_{1/2} \cup \Gamma_{1/3} \cup\Gamma_{1/4} \cup \dots.$$
Clearly, $\mathbb{R}^2\setminus \Gamma$ consists of two non-empty connected components and we let $\om$ be as pictured above. Conditions (1) and (2) are satisfied readily. Some elementary but tedious calculations show that (3) holds. To see that (4) and (5) hold, note that at every point, up to rotation, $\om$ is locally given by the region above the graph of a $C^1$ function and in the definition of vanishing Reifenberg flat and $VMO_{loc}$, we are checking that a condition is satisfied on all compact sets (but not uniformly). The failure of the $(C,R)$-Harnack chain condition follows from the pinching that occurs in $\Gamma_s$ near $(0,0)$ as $s$ tends to zero (and $\C_s$ stays above $y = |x|$). We can use this example in $\mathbb{R}^2$ to obtain a similar example in $\mathbb{R}^n$ for $n \ge 3$ by taking $\om_2 \times \mathbb{R}^{n-2}$, where $\om_2$ is the example constructed here (in $\mathbb{R}^2$).

\begin{corollary}\label{BMOimpflatnessdomains.cor} Let $\delta \in (0, \delta_n]$. Suppose that $\om \subset \ree$ is a  domain satisfying the $(M,R_0)$-two-sided corkscrew condition whose boundary is Ahlfors regular. If $\om$ is unbounded additionally assume that $\ree \setminus \pom$ consists of two nonempty connected components. There exists $c_\delta < \delta$ depending only on $M, \delta$ and the AR constant such that if $\sup_{Q \in \pom} \lVert \nu \rVert_{*}(Q,R_1) < c_\delta$ for some $R_1$, then $\om$ is a $\delta-$chord arc domain with constants depending on $M, \delta, R_0, R_1$ and the AR constant.
\end{corollary}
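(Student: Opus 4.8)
The plan is to verify, one clause at a time, the definition of a $\delta$-chord arc domain (Definition~\ref{def:cadsc}): that $\pom$ is $\delta$-Reifenberg flat as a set; that $\om$ satisfies the separation condition, so that $\om$ is a $\delta$-Reifenberg flat \emph{domain}; that $\om$ is NTA (hence, with Ahlfors regular boundary, a chord arc domain); and that $\sup_{Q\in\pom\cap K}\lVert\nu\rVert_{\ast}(Q,R)<\delta$ for each compact $K$ at a suitable scale $R$. All of these are already within reach of the results of this section together with Remark~\ref{separationredundant} and the Kenig--Toro lemma quoted in Definition~\ref{def:Reifenberg}; the work is in choosing $c_\delta$ and keeping track of the scales.

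I would first fix an auxiliary parameter $\delta'=\delta'(\delta,M)$ with $0<\delta'<\min\{\delta,\delta_n,\delta_0(M)\}$, where $\delta_0(M)$ is the smallness threshold of Remark~\ref{separationredundant} (its proof shows this threshold may be taken to depend only on $M$); passing to $\delta'$ is needed because the given $\delta$ is only assumed $\le\delta_n$, which need not lie below $\delta_0(M)$. After the harmless rescaling $\om\mapsto\om/R_1$ — which leaves the Ahlfors regularity constant and the unit normal unchanged, turns the corkscrew condition into an $(M,R_0/R_1)$-condition, and, by change of variables, turns the hypothesis into $\lVert\nu\rVert_{\ast}(\cdot,1)<c_\delta$ at every boundary point — I would apply Corollary~\ref{cor:locally} with $\delta'$ in place of $\delta$ at each $Q\in\pom$, having set $c_\delta:=\min\{c_{\delta'},\delta/2\}$, where $c_{\delta'}$ is the constant that corollary produces (it depends only on $\delta',M$ and the AR constant, not on $R_0$, by Remark~\ref{scalingconstt1.eq}). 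This gives $\Theta(Q,r)<\delta'$ for all $r\in(0,r_{\delta'})$ uniformly in $Q$, and scaling back we conclude that $\pom$ is $(\delta',R_{\ast})$-Reifenberg flat, with $R_{\ast}:=R_1 r_{\delta'}$ depending only on $\delta,M,R_0,R_1$ and the AR constant; in particular $\pom$ is $\delta$-Reifenberg flat and $(\delta_n,R_{\ast})$-Reifenberg flat.

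Since $\delta'<\delta_0(M)$ and $\om$ has the $(M,R_0)$-two-sided corkscrew condition, Remark~\ref{separationredundant} — whose unbounded version uses precisely our standing hypothesis that $\ree\setminus\pom$ has two nonempty connected components — upgrades the previous step to: $\om$ is a $(\delta',R_{\ast\ast})$-Reifenberg flat domain with $R_{\ast\ast}=\min\{R_0,R_{\ast}\}/2$; in particular the separation condition holds and $\interior(\om^c)\neq\emptyset$ (the latter also being immediate from the exterior corkscrews). Because $\delta'<\delta_n$, the $(\delta',R_{\ast\ast})$-Reifenberg flat domain $\om$ satisfies the hypotheses of Lemma~3.1 of \cite{kenigtoroduke} (quoted in Definition~\ref{def:Reifenberg}), which yields the Harnack chain condition up to scale $R_{\ast\ast}/10$; combined with the two-sided corkscrew condition this makes $\om$ NTA, and with Ahlfors regular boundary a chord arc domain. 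Collecting everything, $\om$ is a $\delta$-Reifenberg flat chord arc domain — for unbounded $\om$ the remaining requirements in Definitions~\ref{def:CAD} and~\ref{def:Reifenberg} (two components, $\interior(\om^c)\neq\emptyset$, $(\delta_n,R)$-Reifenberg flatness) have all been checked. The oscillation bound in Definition~\ref{def:cadsc} is then automatic: take $R=R_1$, so that for any compact $K$, $\sup_{Q\in\pom\cap K}\lVert\nu\rVert_{\ast}(Q,R_1)\le\sup_{Q\in\pom}\lVert\nu\rVert_{\ast}(Q,R_1)<c_\delta<\delta$. Hence $\om$ is a $\delta$-chord arc domain with constants depending only on $M,\delta,R_0,R_1$ and the AR constant.

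The one genuinely delicate point — and the reason the hypothesis is a \emph{uniform} bound $\sup_{Q\in\pom}\lVert\nu\rVert_{\ast}(Q,R_1)<c_\delta$ rather than a $\mathrm{VMO}_{loc}$-type bound holding only over compacta — is the unbounded case: it is exactly this uniformity that forces a uniform flatness scale $R_{\ast\ast}$ and hence, via the Kenig--Toro lemma, Harnack chains at a uniform scale. Example~\ref{baddomain.ex} shows that without it the Harnack chain condition, and so the NTA property, can fail outright; everything else is bookkeeping of scales and constants.
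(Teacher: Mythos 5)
Your proof is correct and follows essentially the same route as the paper's: rescale so that the oscillation hypothesis holds at unit scale, apply Corollary \ref{cor:locally} at every boundary point, and scale back to obtain uniform Reifenberg flatness (the paper rescales by $\min\{R_0,R_1,1\}$ rather than $R_1$, a cosmetic difference). You are in fact more thorough than the paper's own terse proof, which stops after establishing $(\delta, Rr_\delta)$-Reifenberg flatness of $\pom$ and leaves implicit the separation condition via Remark \ref{separationredundant}, the Harnack chains via Lemma 3.1 of \cite{kenigtoroduke}, and the oscillation clause of Definition \ref{def:cadsc}.
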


\begin{proof}
Let $c_\delta$ be as in Corollary \ref{cor:locally} (we may assume $c_\delta < \delta$) and $Q \in \pom$ be arbitrary. Set $R := \min\{R_0,R_1, 1\}$. Define $\widetilde{\om} := \frac{\om - Q}{R}$, then $\widetilde{\om}$ satisfies the $(M,1)$ corkscrew condition and $\partial\widetilde{\om}$ is AR with the same AR constant as $\om$. Moreover, $\tilde{\nu}$, the unit outer normal to $\widetilde{\om}$ satisfies $\lVert \tilde{\nu} \rVert_*(0,1) < c_\delta$. Applying Corollary \ref{cor:locally}, we obtain the existence of $r_\delta$ such that 
$\sup_{r \in (0,r_\delta)} \Theta_{\widetilde{\om}}(0, r) < \delta < \delta_n$. Scaling and translating back to $\om$ and noting that $Q$ was arbitrary we obtain 
$$\sup_{r \in (0,Rr_\delta)}\sup_{Q \in \pom} \Theta_{\widetilde{\om}}(Q, r) < \delta < \delta_n.$$ Thus, $\om$ is $(\delta, Rr_\delta)$-Reifenberg flat. 
\end{proof}

Corollary \ref{nuVMOthrm.thrm} implies a similar result for vanishing chord arc domains. Note that for unbounded domains this does not improve Definition \ref{def:cadsc} substantively.


\begin{corollary}\label{bddvmocor.cor} Let $\om$ be a domain, whose boundary is Ahlfors regular and which satisfies the $(M,R_0)$-two-sided corkscrew condition. If $\om$ is unbounded, we assume that $\ree \setminus \pom$ consists of two nonempty connected components. Let $c_{\delta_n}$ be as in Corollary \ref{cor:locally}. Then any of the following imply that $\om$ is a Vanishing chord arc domain.
\begin{enumerate}
\item $\nu \in VMO_{loc}$ and $\pom$ is bounded.
\item $\lVert \nu \rVert_{BMO} < c_{\delta_n}$ and $\nu \in VMO_{loc}$.
\item $\sup_{Q \in \pom} \lVert \nu \rVert_*(Q,R_1) < c_{\delta_n}$ for some $R_1 > 0$ and $\nu \in VMO_{loc}$.
\end{enumerate}
\end{corollary}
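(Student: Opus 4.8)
The plan is to reduce all three hypotheses to the single situation treated by Corollary~\ref{BMOimpflatnessdomains.cor}---namely, that there is an $R_1>0$ with $\sup_{Q\in\pom}\lVert\nu\rVert_*(Q,R_1) < c_{\delta_n}$, together with $\nu\in VMO_{loc}(d\sigma)$. Once that is in place, Corollary~\ref{BMOimpflatnessdomains.cor} applied with $\delta=\delta_n$ (so that the threshold it demands is exactly the constant $c_{\delta_n}$ coming from Corollary~\ref{cor:locally}) gives that $\om$ is a $\delta_n$-chord arc domain, that is, a chord arc domain with small constant in the sense of Definition~\ref{def:cadsc}. The assumption $\nu\in VMO_{loc}$ is then verbatim the vanishing condition~\eqref{VMOnuVCADdef.eq}, and so $\om$ is a vanishing chord arc domain; one may add, via Corollary~\ref{nuVMOthrm.thrm}, that $\pom$ is also vanishing Reifenberg flat.

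For the three reductions: in case (2), for any fixed $R_1>0$ the quantity $\lVert\nu\rVert_*(Q,R_1)$ is a supremum of averaged oscillations of $\nu$ over balls of radius less than $R_1$ and is therefore dominated by $\lVert\nu\rVert_{BMO(d\sigma)}$; hence $\sup_{Q\in\pom}\lVert\nu\rVert_*(Q,R_1)\le\lVert\nu\rVert_{BMO(d\sigma)}<c_{\delta_n}$ for any such $R_1$, and the $VMO_{loc}$ hypothesis is assumed outright. In case (3) the reduced statement is precisely the hypothesis. In case (1), $\pom$ is closed and bounded, hence compact, so applying the definition of $VMO_{loc}$ with the compact set $K=\pom$ yields $\lim_{r\to0}\sup_{Q\in\pom}\lVert\nu\rVert_*(Q,r)=0$; in particular there is an $R_1>0$ with $\sup_{Q\in\pom}\lVert\nu\rVert_*(Q,R_1)<c_{\delta_n}$, and again $\nu\in VMO_{loc}$ is assumed. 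In the unbounded case (which can occur only under (2) or (3)) the standing hypothesis that $\ree\setminus\pom$ has two nonempty connected components is exactly the extra assumption required by Corollary~\ref{BMOimpflatnessdomains.cor}.

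I do not expect any genuine analytic obstacle here---all of the substantive work was already carried out in Theorem~\ref{nuBMOthrm.thrm} and Corollary~\ref{cor:locally}---so the only points that need care are bookkeeping ones: checking that the constant $c_{\delta_n}$ appearing in the hypotheses is indeed the one delivered by Corollary~\ref{cor:locally} (equivalently by Corollary~\ref{BMOimpflatnessdomains.cor}) at the threshold value $\delta_n$, the dimensional constant for which $\delta_n$-Reifenberg flatness together with two-sided corkscrews (hence, via Remark~\ref{separationredundant}, the separation condition) yields the NTA property, see Lemma~3.1 of \cite{kenigtoroduke}; and verifying in case (1) that compactness of $\pom$ legitimately upgrades the merely locally uniform $VMO_{loc}$ control to control uniform over all of $\pom$. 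The remark immediately following the statement---that for unbounded domains this adds little to Definition~\ref{def:cadsc}---simply reflects that the definition of a chord arc domain with small constant already builds in, for unbounded domains, exactly the global flatness that here must be supplied by hand.
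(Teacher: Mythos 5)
Your proposal is correct and follows essentially the same route as the paper: reduce (1) and (2) to (3) (using boundedness/compactness of $\pom$ together with the $VMO_{loc}$ definition in case (1), and the trivial domination $\lVert\nu\rVert_*(Q,R_1)\le\lVert\nu\rVert_{BMO}$ in case (2)), then invoke Corollary \ref{BMOimpflatnessdomains.cor} to get that $\om$ is a $\delta_n$-chord arc domain and Corollary \ref{nuVMOthrm.thrm} together with the $VMO_{loc}$ hypothesis to upgrade to vanishing chord arc. The bookkeeping points you flag (that $c_{\delta_n}$ is the constant from Corollary \ref{cor:locally} at the NTA threshold $\delta_n$, and that compactness of $\pom$ makes the $VMO_{loc}$ control uniform) are exactly the ones the paper's argument relies on.
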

\begin{proof} By definition $(1) \implies (3)$ and $(2) \implies (3)$. So, we may assume that (3) holds. Corollary \ref{BMOimpflatnessdomains.cor} then shows that $\om$ is a $\delta_n$-chord arc domain and clearly for any compact set $K$ we have
$$\lim_{r \to 0} \sup_{Q \in \pom \cap K} \lVert \nu \rVert_*(Q,r) = 0.$$
The result follows by Corollary \ref{nuVMOthrm.thrm}.
\end{proof}

\section{Application to a Two-Phase Problem For Harmonic Measure}

In the sequel, we will assume $n \ge 2$. In this section, we apply the results of the previous section to a two-phase free boundary problem for harmonic measure, originally studied by Kenig-Toro in \cite{kenigtorotwophase}.  Our approach however, will be a quantified version of that of the first author with Hofmann \cite{bortzhofmann}. In particular, as in \cite{bortzhofmann} we avoid any {\it a priori} assumption on topology. 

Let us introduce the necessary definitions and notation so that we may state the free boundary problem. We will assume that $\Omega^{\pm}$ are uniformly rectifiable (UR) domains (first introduced by Hofmann, Mitrea and Taylor in \cite{hofmannmitreataylor}). To properly define UR domains, we must first recall what it means for a set to be uniformly rectifiable.  We note that the following ``definition" is actually two (quite deep) theorems and that a proper introduction to uniformly rectifiable sets would first give a more geometric characterization (see, e.g. \cite{DavidandSemmes1,DavidandSemmes2}). However, for our purposes, the following characterization of UR sets is most useful: 

\begin{definition}[UR]\label{defur} (aka {\it uniformly rectifiable}).
Let $E\subset \ree$ be an $n$-dimensional Ahlfors regular (hence closed) set with surface measure $\sigma$. Then $E$ is uniformly rectifiable (UR) if and only if the Riesz transform operator, $\mathcal R$ is $L^2$ bounded with respect to surface measure, in the sense that
 \begin{equation}\label{eqrtbound}
 \sup_{\eps>0} \|\mathcal{R}_\eps f\|_{L^2(E,\sigma)} \le C\rVert f \rVert_{L^2(E,\sigma)}\,,
 \end{equation}
(see Definition \ref{riesztransdef} for a definition of $\mathcal R$ and $\mathcal{R}_\eps$). That uniform rectifiability (defined in a geometric sense) implies the Riesz transforms are bounded is due to David and Semmes, \cite{DavidandSemmes1}. The converse is due to \cite{mattilamelnikovverdera} when $n=1$, and \cite{nazarovtolsavolberg} in general. 

The constant $C > 0$ in \eqref{eqrtbound} and the constant implicit in the Ahlfors regularity determine the ``UR character" of $E$. Below, we will denote the dependence of a constant, $K$, on the UR character by $K(UR)$. 
\end{definition}


We can now define a UR domain:
\begin{definition}[UR domain, see \cite{hofmannmitreataylor}]\label{URdom}
We will say that a domain $\om$ is a UR domain if $\pom$ is UR,
and if the measure theoretic boundary
$\partial_*\om$ (see \cite[Chapter 5]{evansandgariepy})
satisfies $\sigma(\pom \setminus  \partial_*\om)=0$.
\end{definition}

Note, in particular, that if an Ahlfors regular domain satisfies the two-sided corkscrew conditions then it is a UR domain (that a domain which satisfies the two-sided corkscrew condition with Ahlfors regular boundary has a UR boundary is a result of David and Jerison \cite{davidandjerison}). Additionally, one should note that if $\om \subset \ree$ is a set of locally finite perimeter then the measure theoretic boundary and the reduced boundary differ by a set of $H^n$ measure zero, so it then follows that the measure theoretic boundary has full measure if and only if the reduced boundary has full measure (see \cite[Section 5.8]{evansandgariepy}).  

For any domain $\om$ with ADR boundary and surface measure $\sigma = \mathcal H^n |_{\pom}$, we adopt the notation for $r > 0$
$$\lVert f \rVert_*(r) := \sup_{Q \in \pom} \lVert f \rVert_*(Q,r),$$
where $\lVert f \rVert_*(Q,r)$ is as in \eqref{eqn:localizedosc}. 

We can now state our theorem (we have two theorems, one for finite and the other for infinite pole):

\begin{theorem}[{Quantified version of \cite[Theorem 1.1]{bortzhofmann}}]\label{FBthrm.thrm}
Let $\om^+ \subset \ree$ and $\om^-= \ree\setminus \overline{\om^+}$ be connected UR domains with common boundary $\pom^+ = \pom^-$ and $\diam(\pom^+) < \infty$. Let $X^+ \in \om^+$ and $X^- \in \om^-$ be such that $k^+ = \frac{d\hm^{X^+}}{d\sigma}$ and $k^- = \frac{d\hm^{X^-}}{d\sigma}$ exist. Let $\delta > 0$ and $r_0 \in (0, \diam(\pom^+))$. There exists $$\eta = 
\eta (\delta,n, UR,r_0, \delta(X^+),\delta(X^-), \diam(\pom^+)) > 0$$  such that if 
$$\lVert \log k^+ \rVert_*(r_0),\lVert \log k^- \rVert_*(r_0) < \eta,$$
then $\om^+$ and $\om^-$ are $(\delta)$-chord arc domains.
\end{theorem}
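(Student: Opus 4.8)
\textbf{Proof proposal for Theorem \ref{FBthrm.thrm}.}

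The plan is to run a compactness/blow-up argument in the spirit of \cite{bortzhofmann}, but tracking quantitative constants, and then invoke Corollary \ref{BMOimpflatnessdomains.cor} to upgrade to the chord-arc conclusion. First I would argue by contradiction: suppose the statement fails for some $\delta > 0$ and $r_0$, so there are UR domains $\om^{(i),\pm}$ with uniform UR character, poles $X^{(i),\pm}$ with $\delta(X^{(i),\pm})$ bounded below and $\diam(\pom^{(i),+})$ bounded above and below, and $\lVert \log k^{(i),\pm} \rVert_*(r_0) \to 0$, yet (say) $\om^{(i),+}$ is not a $\delta$-chord arc domain. The key realization is that the $\delta$-chord arc conclusion, by Corollary \ref{BMOimpflatnessdomains.cor}, reduces to showing $\sup_{Q \in \pom^{(i),+}} \lVert \nu_i \rVert_*(Q, R_1) < c_\delta$ for some fixed scale $R_1$ and $i$ large, where $c_\delta$ is the constant from that corollary. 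So the heart of the matter is to show that small oscillation of $\log k^\pm$ forces small oscillation of the unit normal $\nu$ (at a scale uniform over the boundary and over $i$), after which the earlier machinery of Section \ref{sec:mainarg} does all the geometric work.

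Next I would localize and rescale: fix points $Q_i \in \pom^{(i),+}$ realizing (nearly) the supremum of $\lVert \nu_i \rVert_*(\cdot, R_1)$, translate $Q_i$ to the origin, and rescale by $r_0$ (or a fixed fraction of it) so that the oscillation hypothesis on $\log k^\pm$ holds at unit scale. Applying the compactness Lemma \ref{KTlemma} to the rescaled domains $\om_i^\pm$ (whose corkscrew and Ahlfors-regularity constants are uniformly controlled, since UR domains with the two-sided corkscrew property have uniform such constants), I pass to a subsequence with $\om_i^\pm \to \om_\infty^\pm$, $\pom_i \to \pom_\infty$ in the Hausdorff distance on compacta, and $\chi_{\om_i^\pm} \to \chi_{\om_\infty^\pm}$ in $L^1_{\mathrm{loc}}$. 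Simultaneously, by the Harnack inequality and the uniform lower bound on $\delta(X^{(i),\pm})$ together with the non-degeneracy of the $\om_i^\pm$, the rescaled harmonic measures $\hm_i^\pm$ (suitably normalized) converge weakly to the harmonic measures $\hm_\infty^\pm$ of $\om_\infty^\pm$ with poles at the limiting (interior/exterior) positions — this is the step where one must be careful, since poles could escape to infinity or approach the boundary; the uniform geometric bounds rule this out. Writing $h_i = \log k_i^+ - \log k_i^-$ (the relevant difference, as in Kenig-Toro), the hypothesis forces $\lVert h_i \rVert_*(1) \to 0$, so $h_i$ converges (in an averaged sense) to a constant, i.e. $k_\infty^+/k_\infty^-$ is constant $\sigma_\infty$-a.e. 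By the absolute continuity relations and (as in \cite{kenigtorotwophase}, \cite{bortzhofmann}) the representation of $\log(d\hm^+/d\hm^-)$ via the fundamental solution, this constancy forces $\om_\infty^+$ to be a half-space (alternatively: $\log k_\infty^\pm$ inherits its own vanishing oscillation, so by a one-phase argument each $\hm_\infty^\pm$ has density of vanishing oscillation, and the two-phase rigidity of Kenig-Toro gives the plane).

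Once $\om_\infty^+$ is a half-space, its unit normal is constant, so $\lVert \nu_\infty \rVert_*(0,1) = 0$. The final step is a lower-semicontinuity/continuity argument for $\lVert \nu_i \rVert_*(0, R_1)$ under the convergence $\nu_i \sigma_i \rightharpoonup \nu_\infty \sigma_\infty$ and $\sigma_i \rightharpoonup \sigma_\infty$ (the latter established exactly as in the Remark following the proof of Theorem \ref{nuBMOthrm.thrm}, once one knows $\nu_\infty$ is constant), which yields $\lVert \nu_i \rVert_*(0, R_1) \to 0$, contradicting $Q_i = 0$ realizing the supremum $\geq c_\delta$. Then Corollary \ref{BMOimpflatnessdomains.cor} applied to $\om_i^+$ (and symmetrically $\om_i^-$) gives that they are $\delta$-chord arc domains for $i$ large, the desired contradiction. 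I expect the main obstacle to be the convergence of the harmonic measures and their densities: one must show that $\lVert \log k_i^\pm \rVert_*(1) \to 0$ genuinely passes to a statement about the limiting densities (controlling the averaging balls as they interact with the weak limits, and handling the lack of a Portmanteau theorem for the relevant measures as flagged before the proof of Theorem \ref{nuBMOthrm.thrm}), and that the poles behave well in the limit — equivalently, extracting from \cite{bortzhofmann} the qualitative fact that vanishing oscillation of $\log k^\pm$ forces the blow-up to be flat, and checking it survives the rescaling and is compatible with the finite-diameter normalization.
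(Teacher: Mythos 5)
Your compactness-by-contradiction plan is genuinely different from the paper's proof, and it has a gap that I don't think is fixable without essentially reproducing the argument you are trying to avoid. The paper does not take any limit of harmonic measures. Instead, it proves a \emph{direct quantitative estimate}: it first uses Lemma \ref{RH4est.lem} and Lemma \ref{uranddoublingimpliescorkscrews} to get two-sided corkscrews, and then runs a single layer potential decomposition (the terms $\RNum{1},\RNum{2},\RNum{3}$, the jump relations \eqref{eq16}--\eqref{eq30}, and the $A_\infty$ estimates of Lemmas \ref{Lemma 2} and \ref{Lemma 1}) to obtain $\lVert \nu \rVert_{*}(Q,\eta)\le C\eta^\gamma$ uniformly in $Q$. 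Only then does it invoke Corollary \ref{cor:locally}. The layer potential step is the technical heart of the argument, and it is exactly what your proposal tries to bypass.

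The concrete gap in your proposal is the line where you pass the rescaled harmonic measures and Poisson kernels to the limit and then invoke ``two-phase rigidity'' to conclude $\om_\infty^+$ is a half-space. At that stage of the argument you have only the two-sided corkscrew condition and Ahlfors regularity; you do \emph{not} have Harnack chains or any quantitative connectivity, and therefore no CFMS-type estimates or doubling-at-all-scales control that would let you show $\hm_i^\pm \rightharpoonup \hm_\infty^\pm$ with $d\hm_\infty^\pm/d\sigma_\infty$ equal to the limit of the $k_i^\pm$. (This is precisely the obstruction Remark \ref{unboundedspellstrouble} and Example \ref{baddomain.ex} are warning about, and it is why the paper goes to the single layer potential, whose boundedness and jump relations depend only on UR, not on NTA.) Your fallback --- ``extract from \cite{bortzhofmann} the qualitative fact that vanishing oscillation of $\log k^\pm$ forces the blow-up to be flat'' --- is circular: that qualitative fact \emph{is} the layer potential argument of \cite{bortzhofmann}, so a compactness wrapper around it would still have to carry out the estimate you are deferring. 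There is also a secondary issue with your contradiction setup: $\lVert \nu_i\rVert_*(Q_i,R_1)\ge c_\delta$ is a supremum over scales $s\in(0,R_1)$, and the near-extremal scales $s_i$ can tend to $0$, so you would in any case need to rescale by $s_i$ (as the paper does with $r_i$ in Theorem \ref{nuBMOthrm.thrm}) rather than by the fixed $r_0$; Lemma \ref{KTlemma} as stated requires $r_i\downarrow 0$, and without that rescaling you would not be able to conclude that $\nu_\infty$ constant on a half-space contradicts the hypothesis. In short, the compactness framework is appropriate for the unit-normal statement (Theorem \ref{nuBMOthrm.thrm}), but it does not give a route around the quantitative layer-potential estimate linking $\log k^\pm$ to $\nu$.
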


We state the next theorem without proof (see Remark \ref{onefiniteoneinfinite}). Note we assume that the domains are NTA domains from the outset, as it is not clear how to even define the Poisson kernel with pole at infinity without this assumption.

\begin{theorem}[{Quantified version of \cite[Theorem 1.2 (2)]{bortzhofmann}}]\label{UnboundedFBthrm}
Let $\om^+ \subset \ree$ and $\om^-= \ree\setminus \overline{\om^+}$ be $(M,\infty)$-chord arc domains with common boundary $\pom^+ = \pom^-$ and $\diam(\pom^+) = \infty$. Then for any $\delta > 0$ there exists $\eta = \eta(n, AR, M, \delta) > 0$ such that  if 
$$\lVert \log k^+ \rVert_{BMO(d\sigma)}, \lVert \log k^+ \rVert_{BMO(d\sigma)} < \eta$$
then $\om^+$ and $\om^-$ are $\delta$-chord arc domains. Here $k^+$ and $k^-$ are the Poisson kernels with pole at infinity for the domains $\om^+$ and $\om^-$ respectively.
\end{theorem}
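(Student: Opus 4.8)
The plan is to treat Theorem \ref{UnboundedFBthrm} as the pole-at-infinity analogue of Theorem \ref{FBthrm.thrm} (cf.\ Remark \ref{onefiniteoneinfinite}): carry over the proof of the latter with harmonic measure with a finite pole replaced by harmonic measure with pole at infinity, and then invoke Theorem \ref{nuBMOthrm.thrm} to convert quantitative control on the unit normal into the chord arc conclusion. The first reduction is the following: since $\om^{\pm}$ are $(M,\infty)$-chord arc domains --- in particular they satisfy the $(M,\infty)$-two-sided corkscrew condition, have Ahlfors regular boundary, and $\ree\setminus\pom^{+}=\om^{+}\cup\om^{-}$ has two nonempty connected components --- Corollary \ref{BMOimpflatnessdomains.cor} guarantees that if $\lVert\nu\rVert_{BMO(d\sigma)}<c_{\delta}$ (so in particular $\sup_{Q}\lVert\nu\rVert_{*}(Q,R_{1})<c_{\delta}$ for every $R_{1}>0$), where $c_{\delta}=c_{\delta}(n,AR,M,\delta)$ is the constant from Corollary \ref{cor:locally}, then $\om^{\pm}$ are $\delta$-chord arc domains. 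Hence it suffices to establish the quantitative implication: for every $\varepsilon>0$ there is $\eta=\eta(n,AR,M,\varepsilon)>0$ such that
\[
\lVert\log k^{+}\rVert_{BMO(d\sigma)},\ \lVert\log k^{-}\rVert_{BMO(d\sigma)}<\eta
\quad\Longrightarrow\quad
\lVert\nu\rVert_{BMO(d\sigma)}<\varepsilon ,
\]
which we then apply with $\varepsilon=c_{\delta}$.

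I would prove the displayed implication by contradiction and a blow-up. If it fails there are $\varepsilon_{0}>0$ and complementary $(M,\infty)$-chord arc domains $\om^{+}_{j}$, $\om^{-}_{j}=\ree\setminus\overline{\om^{+}_{j}}$, with common boundary of uniformly bounded Ahlfors regularity character, Poisson kernels with pole at infinity $k^{\pm}_{j}$ satisfying $\lVert\log k^{\pm}_{j}\rVert_{BMO(d\sigma_{j})}\to0$, but $\lVert\nu_{j}\rVert_{BMO(d\sigma_{j})}\ge\varepsilon_{0}$. Then there exist $Q_{j}\in\pom_{j}$ and $s_{j}\in(0,\infty)$ with $\fint_{B(Q_{j},s_{j})}|\nu_{j}-\fint_{B(Q_{j},s_{j})}\nu_{j}\,d\sigma_{j}|^{2}\,d\sigma_{j}\ge\varepsilon_{0}^{2}/2$; I translate so that $Q_{j}=0$ and replace $\om^{\pm}_{j}$ by $\om^{\pm}_{j}/s_{j}$. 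Dilation changes the Green function with pole at infinity only by a multiplicative constant, hence $\log k^{\pm}_{j}$ only by an additive constant, so these $BMO$ norms are unchanged; the $(M,\infty)$-NTA character and the Ahlfors regularity character are dilation invariant; and the oscillation bound becomes $\fint_{B(0,1)}|\nu_{j}-(\nu_{j})_{B(0,1)}|^{2}\,d\sigma_{j}\ge\varepsilon_{0}^{2}/2$. Applying the NTA version of the compactness lemma (the argument behind Lemma \ref{KTlemma} run to completion, see \cite[Appendix B]{badgerengelsteintoro}), I pass to a subsequence with $\om^{\pm}_{j}\to\om^{\pm}_{\infty}$ in the Hausdorff distance uniformly on compacta, where $\om^{+}_{\infty}$ and $\om^{-}_{\infty}=\interior((\om^{+}_{\infty})^{c})$ are complementary unbounded NTA domains with $n$-Ahlfors regular common boundary and $0\in\pom_{\infty}$; moreover $\chi_{\om^{\pm}_{j}}\to\chi_{\om^{\pm}_{\infty}}$ in $L^{1}_{\mathrm{loc}}$, so $\nu_{j}\,d\sigma_{j}=[d\chi_{\om^{+}_{j}}]\rightharpoonup[d\chi_{\om^{+}_{\infty}}]=\nu_{\infty}\,d\sigma_{\infty}$.

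The heart of the matter --- and the step I expect to be the main obstacle --- is the compactness theory for harmonic measure (and surface measure) with pole at infinity under Hausdorff convergence, which is exactly the machinery developed by Kenig--Toro \cite{kenigtorotwophase} and by the first author and Hofmann \cite{bortzhofmann}. Because the $\om^{\pm}_{j}$ are NTA at \emph{all} scales, their Green functions with pole at infinity exist and, after the standard normalization, converge locally uniformly to Green functions with pole at infinity of $\om^{\pm}_{\infty}$; consequently the harmonic measures with pole at infinity converge weakly-$*$, $\hm^{\pm}_{j}\rightharpoonup\hm^{\pm}_{\infty}$, and (using Ahlfors regularity together with the corkscrew condition, in the spirit of the remark following the proof of Theorem \ref{nuBMOthrm.thrm}) $\sigma_{j}\rightharpoonup\sigma_{\infty}$. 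Since $\lVert\log k^{\pm}_{j}\rVert_{BMO}\to0$, John--Nirenberg shows the densities $k^{\pm}_{j}=d\hm^{\pm}_{j}/d\sigma_{j}$ are uniformly close to positive constants on every ball at every scale, so in the limit $k^{\pm}_{\infty}\equiv c^{\pm}$ for positive constants $c^{\pm}$, i.e.\ $\hm^{\pm}_{\infty}=c^{\pm}\sigma_{\infty}$. This is precisely where the hypothesis that $\om^{\pm}$ be $(M,\infty)$-chord arc (rather than merely two-sided corkscrew) cannot be dispensed with: it is needed to define $k^{\pm}$ at all and to ensure the blow-up limits again carry a Green function with pole at infinity (cf.\ Remark \ref{unboundedspellstrouble}).

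Finally, I would invoke the rigid case of the two-phase analysis to conclude that $\pom_{\infty}$ is a hyperplane. Let $u^{\pm}$ be the Green functions with pole at infinity of $\om^{\pm}_{\infty}$, normalized so that the associated harmonic measures are exactly $c^{\pm}\sigma_{\infty}$; put $\mu:=c^{+}/c^{-}$ and $v:=u^{+}\chi_{\om^{+}_{\infty}}-\mu\,u^{-}\chi_{\om^{-}_{\infty}}$. Then $v$ is continuous across $\pom_{\infty}$, and an integration by parts (using that $u^{\pm}$ vanish on $\pom_{\infty}$ with inward normal derivatives $c^{\pm}$) identifies the distributional Laplacian of $v$ on $\ree$ with $(c^{+}-\mu c^{-})\,\sigma_{\infty}=0$; hence $v$ is harmonic on all of $\ree$. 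Since $\hm^{\pm}_{\infty}=c^{\pm}\sigma_{\infty}$ with $\sigma_{\infty}$ Ahlfors regular, the Kenig--Toro estimates relating the Green function with pole at infinity to harmonic measure give $u^{\pm}(X)\lesssim|X|$, so interior gradient estimates make $\nabla v$ a bounded harmonic function on $\ree$, hence constant by Liouville; thus $v$ is affine and $\pom_{\infty}=\{v=0\}$ is an $n$-plane, so $\nu_{\infty}$ equals $\sigma_{\infty}$-a.e.\ a fixed unit vector $\vec N$. To close the argument, fix $\varphi\in C_{c}(\ree)$ with $\chi_{B(0,1)}\le\varphi\le\chi_{B(0,2)}$ and use $|\nu_{j}|=1$ $\sigma_{j}$-a.e.\ to write
\[
\int\varphi\,|\nu_{j}-\vec N|^{2}\,d\sigma_{j}
=2\int\varphi\,d\sigma_{j}-2\vec N\cdot\int\varphi\,\nu_{j}\,d\sigma_{j}
\ \longrightarrow\ 2\int\varphi\,d\sigma_{\infty}-2\vec N\cdot\int\varphi\,\nu_{\infty}\,d\sigma_{\infty}=0 ,
\]
using $\sigma_{j}\rightharpoonup\sigma_{\infty}$, $\nu_{j}\,d\sigma_{j}\rightharpoonup\vec N\,d\sigma_{\infty}$ and $\nu_{\infty}\equiv\vec N$. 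Combined with lower Ahlfors regularity (so $\sigma_{j}(B(0,1))$ is bounded below) and the fact that $(\nu_{j})_{B(0,1)}$ minimizes the mean-square deviation, this forces $\fint_{B(0,1)}|\nu_{j}-(\nu_{j})_{B(0,1)}|^{2}\,d\sigma_{j}\to0$, contradicting the lower bound $\ge\varepsilon_{0}^{2}/2$ and finishing the proof.
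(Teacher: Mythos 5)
First, note that the paper deliberately gives no proof of Theorem \ref{UnboundedFBthrm}: Remark \ref{onefiniteoneinfinite} states that it follows by running the layer-potential argument of Theorem \ref{FBthrm.thrm} (made simpler because the $(M,\infty)$-NTA hypothesis gives the CFMS estimates immediately) and then invoking Corollary \ref{cor:locally}. Your route is genuinely different: instead of the jump relations for $\nabla\mathcal S$ you do a direct blow-up and appeal to the Kenig--Toro pole-at-infinity rigidity ($v=u^+-\mu u^-$ harmonic and affine, hence the limit boundary is a hyperplane). That is a legitimate strategy in spirit, but as written it has a concrete gap.

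The gap is the assertion that $\sigma_j\rightharpoonup\sigma_\infty$, which you invoke both to identify $\hm^{\pm}_\infty=c^{\pm}\sigma_\infty$ and in the final computation $\int\varphi\,|\nu_j-\vec N|^2\,d\sigma_j=2\int\varphi\,d\sigma_j-2\vec N\cdot\int\varphi\,\nu_j\,d\sigma_j\to 0$. You justify it ``in the spirit of the remark following the proof of Theorem \ref{nuBMOthrm.thrm}'', but that remark \emph{derives} $\sigma_i\rightharpoonup\sigma_\infty$ from \eqref{alllimeq.eq} and the already-established identity $\nu_\infty\equiv\vec N$, both of which come from the smallness of $\lVert\nu_i\rVert_{BMO}$ --- precisely what is unavailable in your blow-up, where $\lVert\nu_j\rVert_{BMO}\ge\varepsilon_0$ is the contradiction hypothesis. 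For sets of locally finite perimeter, $L^1_{loc}$ convergence of $\chi_{\om_j}$ gives only $\nu_j\,d\sigma_j\rightharpoonup\nu_\infty\,d\sigma_\infty$ and the lower semicontinuity $\int\varphi\,d\sigma_\infty\le\liminf\int\varphi\,d\sigma_j$; the upper bound $\limsup\int\varphi\,d\sigma_j\le\int\varphi\,d\sigma_\infty$ is exactly the failure of Portmanteau that Section \ref{sec:mainarg} is built to circumvent, and without it your last display only yields $\limsup\ge0$, not $\to0$. The gap is repairable, but not by citation: one should first pass to the limit in the \emph{ratio} $h_j=k^-_j/k^+_j$ (whose logarithm has $BMO$ norm at most $2\eta$) to get $\hm^-_\infty=\lambda\hm^+_\infty$ and run the harmonicity/Liouville argument purely in terms of harmonic measure; only after the hyperplane conclusion, when $\hm_\infty$ is explicitly a constant multiple of $H^n$ on a plane, can one recover $\sigma_j=k_j^{-1}\hm_j\rightharpoonup\sigma_\infty$ from $\hm_j\rightharpoonup\hm_\infty$ and the small-$BMO$ control on $k_j$. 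Alternatively --- and this is the paper's intent --- one avoids the issue entirely by first proving the quantitative bound \eqref{eq33} on $\fint_\Delta|\nu-(\nu)_\Delta|^2\,d\sigma$ via the single layer potential and then feeding it into Corollary \ref{cor:locally}, whose compactness argument handles the missing upper semicontinuity internally.
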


A few remarks are in order.
\begin{remark}\label{onefiniteoneinfinite}
1) We omit the proof of Theorem \ref{UnboundedFBthrm} because it follows from  \cite[Theorem 1.2 (2)]{bortzhofmann} and Corollary \ref{cor:locally} in much the same way as Theorem \ref{FBthrm.thrm}. In fact, the hypothesis that $\Omega^\pm$ are $(M,\infty)$-chord arc domains allows one to immediately apply the ``CFMS" estimates \cite{cfms}, which makes the proof of Theorem \ref{UnboundedFBthrm} simpler than that of Theorem \ref{FBthrm.thrm} (see \cite{bortzhofmann} for more details).


2) To simplify matters, we will prove Theorem \ref{FBthrm.thrm} when $\mathrm{diam}(\pom^+) = 1.$ For this reason, we will state many of the lemmas below for $\diam(\pom^+) = 1$. A simple scaling argument recovers the general case.
\smallskip

3)  We note that \cite{kenigtoroduke} refers to $(\delta, R)$-chord arc domains which are simply $\delta$-chord arc domains where the flatness and the oscillation of the unit normal are controlled up to scale $R > 0$ globally. In Theorem \ref{FBthrm.thrm} above (and in Definition \ref{def:cadsc}) we follow the lead of \cite{kenigtoroannsci} and suppress the  scale. However, our methods allow us to keep track of the scale at which the flatness and oscillation are controlled and we try to make that clear in the proof below. 
\end{remark}

To prove Theorem \ref{FBthrm.thrm}, we hope to employ Corollary \ref{cor:locally}. Therefore, we must produce the two-sided corkscrew condition.  The following lemma, while a simple consequence of observations in \cite{azzammourgogloutolsavolberg} and \cite{azzamhofmannmartellnystromtoro}, may be of independent interest. 

\begin{lemma}\label{uranddoublingimpliescorkscrews}
Let $\Omega^{+} \subset \mathbb R^{n+1}$ and $\om^- = \ree \setminus \overline{\om^+}$ be connected UR domains in $\mathbb R^{n+1}$ with common boundary $\partial \Omega^+ = \partial \Omega^-$. Let $C > 0$ and suppose $X^{\pm} \in \Omega^{\pm}$ are such that 
\begin{equation}\label{eqn:doublingcondition} \omega^{X^{\pm}}(\Delta(Q,r)) \leq C\omega^{X^{\pm}}(\Delta(Q,r/2))\end{equation} 
for all $Q\in \partial \Omega^{+}$ and $0< r < \tfrac{1}{4}\min\{\delta(X^+), \delta(X^-), \diam(\pom^+)\}$. That is, $\omega^{X^{\pm}}$ satisfy a doubling condition.

Then there exists $M, R_0 > 0$, depending on $n, UR, \delta(X^+), \delta(X^-)$ and the constant $C$ in \eqref{eqn:doublingcondition} such that $\Omega^{\pm}$ satisfy the two-sided $(M, R_0)$-corkscrew condition. 
\end{lemma}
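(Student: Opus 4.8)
The plan is to reduce the two-sided statement to a one-sided one and then argue by contradiction, producing a failure of doubling from a failure of the corkscrew condition. First I would record that it suffices to prove: \emph{if $\Omega'$ is a connected UR domain with $n$-AR boundary and $\omega^{X_0}_{\Omega'}$ is doubling on surface balls up to scale $\sim\min\{\delta(X_0),\diam\partial\Omega'\}$, then $\Omega'$ satisfies an interior corkscrew condition with constants controlled by $n$, the UR character, the doubling constant $C$, and $\delta(X_0)/\diam\partial\Omega'$.} Granting this, I apply it to $\Omega'=\Omega^{+}$ with pole $X^{+}$ to get interior corkscrews for $\Omega^{+}$, and to $\Omega'=\Omega^{-}$ with pole $X^{-}$ to get interior corkscrews for $\Omega^{-}$; since $\Omega^{-}=\ree\setminus\overline{\Omega^{+}}$, a ball contained in $\Omega^{-}\cap B(Q,r)$ is an exterior corkscrew ball for $\Omega^{+}$ (and $\partial\Omega^{+}=\partial\Omega^{-}$), so the two-sided $(M,R_0)$-condition follows with $R_0\sim\min\{\delta(X^{+}),\delta(X^{-}),\diam\partial\Omega^{+}\}$.

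For the one-sided statement I would next isolate the purely geometric dictionary, valid for any open set with $n$-AR boundary, between the interior corkscrew condition at $(Q,r)$ and interior thickness $|\Omega'\cap B(Q,r)|\gtrsim r^{n+1}$, with quantitatively related constants. The nontrivial direction is routine: if $\Omega'\cap B(Q,r)$ contains no ball of radius $r/M$, then a maximal $r/M$-net argument (each net point lies within $r/M$ of $\partial\Omega'$, since the ball of radius $r/M$ about it meets $(\Omega')^{c}$) shows $\Omega'\cap B(Q,r/2)$ lies in the $2r/M$-neighborhood of $\partial\Omega'$; covering $\partial\Omega'\cap B(Q,r)$ by $\lesssim M^{n}$ balls of radius $r/M$ (upper AR) then gives $|\Omega'\cap B(Q,r/2)|\lesssim M^{-1}r^{n+1}$. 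Hence it is enough to show that doubling of $\omega^{X_0}_{\Omega'}$ forces $|\Omega'\cap B(Q,r)|\ge c\,r^{n+1}$ for all $Q\in\partial\Omega'$ and $r$ below the relevant scale.

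This last implication is where the observations of Azzam--Mourgoglou--Tolsa--Volberg \cite{azzammourgogloutolsavolberg}, together with the CDC machinery recalled in \cite{azzamhofmannmartellnystromtoro}, come in. Suppose $|\Omega'\cap B(Q,r)|\le\varepsilon r^{n+1}$ with $\varepsilon$ small; as in the net argument, $\Omega'\cap B(Q,r/2)$ is then contained in the $\varepsilon' r$-neighborhood of $\partial\Omega'$, where $\varepsilon'=\varepsilon'(\varepsilon,n)\to 0$. Since $\partial\Omega'$ is $n$-AR it satisfies the capacity density condition, so nonnegative harmonic functions on $\Omega'$ vanishing on $\partial\Omega'$ obey the boundary oscillation-decay estimate $\sup_{B(z,\rho)}u\le\theta\,\sup_{B(z,2\rho)}u$ whenever $\dist(z,\partial\Omega')\lesssim\rho$, with $\theta=\theta(n,AR)<1$. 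Applying this to $u(X)=\omega^{X}_{\Omega'}(\Delta(Q,r/4))$ along a chain of $\sim 1/\varepsilon'$ balls of radius $\sim\varepsilon' r$ joining $\partial B(Q,r/2)$ to $\partial B(Q,r/4)$ inside the thin region yields $\omega^{X_0}_{\Omega'}(\Delta(Q,r/4))\le\tau(\varepsilon')\,\omega^{X_0}_{\Omega'}(\Delta(Q,r))$ with $\tau(\varepsilon')=\theta^{\,c/\varepsilon'}\to 0$; one also invokes Bourgain's estimate (again from AR of $\partial\Omega'$) both to enter $B(Q,r/2)$ from the distant pole $X_0$ at bounded cost relative to $\omega^{X_0}_{\Omega'}(\Delta(Q,r))$ and to guarantee $\omega^{X_0}_{\Omega'}(\Delta(Q,r/4))>0$ (the latter via connectedness and the minimum principle). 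Comparing with the hypothesized doubling bound $\omega^{X_0}_{\Omega'}(\Delta(Q,r))\le C^{2}\,\omega^{X_0}_{\Omega'}(\Delta(Q,r/4))$ forces $\tau(\varepsilon')\gtrsim C^{-2}$, hence a lower bound on $\varepsilon'$ and so on $\varepsilon$, i.e. the desired thickness with $c=c(n,AR,C)$. Tracking the scales used (doubling is assumed, and $X_0$ is far from $\partial\Omega'$, only up to $\sim\min\{\delta(X_0),\diam\partial\Omega'\}$) gives the stated dependence of $R_0$.

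The step I expect to be the main obstacle is exactly the quantitative claim in the third paragraph: that small interior Lebesgue density at $(Q,r)$ forces genuine confinement of $\Omega'$ in a thin tubular neighborhood of $\partial\Omega'$ and, via the iterated CDC oscillation estimate, an \emph{exponential} (in $1/\varepsilon'$) decay of harmonic measure between the scales $r/4$ and $r$, uniformly over the possible geometries of that thin region. The remaining ingredients — the symmetry reduction, the corkscrew/thickness dictionary, Bourgain's lemma, and the bookkeeping of scales — are standard.
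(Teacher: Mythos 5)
Your overall architecture is sound but genuinely different from the paper's. The paper never proves a one-sided ``doubling implies interior corkscrew'' statement; instead it uses the UR hypothesis through the bilateral weak geometric lemma to locate a flat ball $B(\widetilde Q,\tilde r)\subset B(Q,r)$, observes that the two convex regions off the thin slab around the approximating plane each lie entirely in one phase, gets one of them into (say) $\Omega^+$ via the David--Jerison corkscrew for $(\pom)^c$, and then invokes \cite[Lemma 3.3]{azzammourgogloutolsavolberg} as a black box to transfer Lebesgue thickness from $\Omega^+$ to $\Omega^-$ using the doubling of $\omega^{X^-}$. Your route replaces the BWGL and the two-phase transfer lemma by a direct one-sided potential-theoretic argument (corkscrew $\leftrightarrow$ Lebesgue thickness, then thinness forces decay of harmonic measure across an annulus, contradicting doubling); it uses only Ahlfors regularity (via CDC and Bourgain's lemma) and connectedness, not uniform rectifiability, so it is in principle more general and more self-contained, at the cost of redoing by hand estimates the paper simply cites.

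The one step that does not hold up as written is exactly the one you flag: the claimed decay $\tau(\varepsilon')=\theta^{c/\varepsilon'}$ obtained by iterating the CDC oscillation estimate ``along a chain of $\sim 1/\varepsilon'$ balls of radius $\sim\varepsilon' r$.'' The estimate $\sup_{B(z,\rho)}u\le\theta\sup_{B(z,2\rho)}u$ does not compound along such a lateral chain, because $\sup_{B(z_j,2\varepsilon'r)\cap\Omega'}u$ is not controlled by the supremum over the next ball in the chain; genuine exponential decay in thin regions requires a different (and more delicate) argument. Fortunately you do not need it: for $X\in\Omega'\cap\partial B(Q,r/2)$ with a boundary point $z$ at distance $\le\varepsilon' r$, the same CDC estimate iterated \emph{radially} at $z$ over the dyadic scales between $\varepsilon' r$ and $r/8$ (where $u=\omega^{(\cdot)}(\Delta(Q,r/4))$ vanishes on $\partial\Omega'\cap B(z,r/8)$) gives the standard boundary H\"older bound $u(X)\le C(\varepsilon')^{\beta}$ with $\beta=\beta(n,AR)>0$. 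Feeding this polynomial bound into your maximum-principle/Bourgain comparison and the two iterations of doubling already forces $\varepsilon'\gtrsim (C^2)^{-1/\beta}$, which is the lower bound on thickness you need. With that substitution (and the routine care about irregular boundary points and behavior at infinity in the maximum principle on $\Omega'\setminus\overline{B(Q,r/2)}$), your proof goes through.
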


\begin{proof}
Set $\pom:= \pom^+$. First, we claim that for all $\epsilon > 0$, $Q \in \pom$ and $r \in (0, \diam(\pom^+))$ there exists $\widetilde{Q} \in \pom$ and $\tilde{r} \approx_\epsilon r$ with $B(\tilde{Q},\tilde{r}) \subset B(Q,r)$ and an affine $n$-plane $V(\widetilde{Q},\tilde{r})$ satisfying
\begin{equation}\label{eq3.9.eq}
D[V(\widetilde{Q},\tilde{r})\cap B(\widetilde{Q},\tilde{r}); \partial \Omega \cap B(\widetilde{Q},\tilde{r})] < \varepsilon \tilde{r}.
\end{equation}
Indeed, the bilateral weak geometric lemma (BWGL) \cite[Theorem 2.4]{DavidandSemmes2} guarantees the existence of $\widetilde{Q}$, $\tilde{r}$ and $V(\widetilde{Q},\tilde{r})$. The BWGL states, if $\pom$ is UR, the failure of \eqref{eq3.9.eq} is quantified by a Carleson packing condition. Then a pigeon-hole argument yields  $\widetilde{Q}$, $\tilde{r}$ and $V(\widetilde{Q},\tilde{r})$, see the discussion following \cite[Lemma 4.1]{azzamhofmannmartellnystromtoro}.

Now, let $Q \in \pom$, $0 < r < \tfrac{1}{16}\min\{\delta(X^+), \delta(X^-), \diam(\pom^+)\}$ and let $\widetilde{Q} \in \pom$, $\tilde{r}$ and $V(\widetilde{Q},\tilde{r})$ be as above with $0 < \epsilon \ll 1$ to be chosen.
We define two sets 
$$B^\pm(\widetilde{Q},\tilde{r}) := \{z \in B(\widetilde{Q},\tilde{r})\mid \pm \left\langle z- \tilde{Q}, \hat{n}_V\right\rangle > \varepsilon \tilde{r}\},$$
where $\hat{n}_V$ is a perpendicular to $V(\widetilde{Q},\tilde{r})$. It is well known (see \cite{davidandjerison}) that if $\pom$ is Ahlfors regular, then $\pom^c$ satisfies the corkscrew condition. Thus, for all $0 < \rho < \diam(\pom^+)$ and $P \in \pom^+$ there is $Y_{P,\rho} \in B(P,\rho) \cap \pom^c$ with $\dist(Y_{P,\rho}, \pom) > \rho/M$, where $M$ depends on AR. Without loss of generality (symmetry of hypothesis), and by choice of $\epsilon$ small and choice of the sign of $\hat{n}_V$, we may assume $A^+ \equiv Y_{\widetilde{Q}, \tilde{r}} \in B^+(\widetilde{Q}, \tilde{r}) \cap \om^+$. Note that $\pom \cap B^\pm(\widetilde{Q},\tilde{r}) = \emptyset$. In particular, the convexity of $B^+(\widetilde{Q},\tilde{r})$ implies that there cannot be points from both $\Omega^+$ and $\Omega^-$ inside  $B^+(\widetilde{Q},\tilde{r})$ (similarly for $B^-(\widetilde{Q}, \tilde{r})$).  We claim $B^-(\widetilde{Q},\tilde{r}) \cap \om^- \neq \emptyset$. If this claim is true, then the above argument shows $B^-(\widetilde{Q},\tilde{r}) \subset \Omega^-$ and we set $A^- = \tilde{Q} - 2\varepsilon \tilde{r}\hat{n}_V\in \Omega^-\cap B(\tilde{Q}, \tilde{r})$.

To this end, we appeal to \cite[Lemma 3.3]{azzammourgogloutolsavolberg}. In our setting, this lemma yields the following: if $\mathcal H^{n+1}(B(\widetilde{Q},\tilde{r})\cap \Omega^+) \geq \kappa \tilde{r}^{n+1}$, then the doubling of $\omega^{X^-}$ implies that $\mathcal H^{n+1}(B(\widetilde{Q},\tilde{r})\cap \Omega^-) \geq \tilde{\kappa}\tilde{r}^{n+1}$, where $\tilde{\kappa} > 0$ depends on $C$ and $\kappa > 0$. Since $B(Y_{\widetilde{Q},\tilde{r}}, \tilde{r}/M) \subset B(\widetilde{Q}, \tilde{r}) \cap \om^+$, this lemma yields $B(\widetilde{Q},\tilde{r}) \cap \om^-$ must intersect $B^+(\widetilde{Q}, \tilde{r})$ or $B^-(\widetilde{Q},\tilde{r})$ non-trivially for all $\epsilon$ sufficiently small. Since $B^+(\widetilde{Q}, \tilde{r}) \subset \om^+$ it must be the case that $B^-(\widetilde{Q}, \tilde{r}) \subset \om^-$. Having verified our claim, the points $A^+, A^-$ (defined above) suffice as corkscrew points for $Q$ at scale $r$ (we can change location and scale as $B(\tilde{Q}, \tilde{r}) \subset B(Q,r)$ and $\tilde{r} \approx r$). 
\end{proof}

\begin{remark}We quickly remark that \cite[Lemma 2.1]{azzammourgogloutolsavolberg} is used to prove Lemma 3.3 in \cite{azzammourgogloutolsavolberg}. While \cite[Lemma 2.1]{azzammourgogloutolsavolberg} is stated for bounded domains, the lemma always holds for open sets with Ahlfors regular boundary (regardless of boundedness). 
\end{remark}

Our main tool in the proof of Theorem \ref{FBthrm.thrm} is the single layer potential, we recall its definition now:

\begin{definition}[Riesz transforms and the single layer potential]\label{riesztransdef}
Let $E\subset \ree$ be an $n$-dimensional AR (hence closed) set with surface measure $\sigma$.
We define the (vector valued) Riesz kernel as
\begin{equation}\label{RieszKern}
\mathcal{K}(x) = \tilde{c}_n\frac{x}{|x|^{n+1}}
\end{equation}
where $\tilde{c}_n$ is chosen so that $\mathcal{K}$ is the gradient of fundamental solution to the Laplacian. 
For a Borel measurable function $f$, we then define the Riesz transform 
\begin{equation}\label{RieszXform}
\mathcal{R}f(X) := \mathcal{K} \ast (f\sigma)(X) = \int_{E} \mathcal{K}(X-y)f(y)\, d\sigma(y) \quad X \in \ree\,,
\end{equation} 
as well as the truncated Riesz transforms
$$\mathcal{R}_\eps f(X):= \int_{E\,\cap\,\{|X-y|>\eps\}} \mathcal{K}(X-y)f(y)\, d\sigma(y)\,,\qquad  \eps>0\,.$$
We define $\mathcal{S}$ the single layer potential for the Laplacian relative to $E$ to be
\begin{equation}
\mathcal{S}f(X): = \int_{E} \mathcal{E}(X -y) f(y) \, d\sigma(y),
\end{equation}
where $\mathcal{E}(X) = c_n|X|^{1-n}$ is the (positive) fundamental solution to the 
Laplacian in $\ree$. Notice that $\nabla \mathcal{S}f(X) = \mathcal{R}f(X)$ for $X \not\in E$.
\end{definition}


\begin{definition}[Nontangential approach region and maximal function]\label{NTapp} Fix $\alpha > 0$ and let $\om$ be a domain,
then for $x \in \pom$ we define the nontangential approach region (or ``cone")
\begin{equation}\label{NTapp1}
\Gamma(x) = \Gamma_\alpha(x) = \{Y \in \om : |Y - x| < (1 + \alpha)\delta(Y)\}. 
\end{equation}
We also define the nontangential maximal function for $u: \om \to \re$
\begin{equation}\label{NTapp2}
\mathcal{N}u(x) = \mathcal{N}_\alpha u(x) = \sup_{Y \in \Gamma_\alpha(x)}|u(Y)|, \quad x \in \pom.
\end{equation}
We make the convention that $\mathcal{N}u(x) = 0$ when $\Gamma_\alpha(x)=\emptyset$.
\end{definition}

The relationship between the two definitions above is made clear in the following two lemmas:

\begin{lemma}[\cite{hofmannmitreataylor}, \cite{DavidandSemmes1}]
For all $p \in (1,\infty)$ we have
\begin{equation}\label{eq15}
\lVert \mathcal{N}(\nabla \mathcal{S} f)\rVert_{L^p(d\sigma)}  \le C \lVert f \rVert_{L^p(d\sigma)},
\end{equation}
where $C$ depends on the UR character of $\pom$, dimension, $p$, and the aperture of the cones defining
$\mathcal{N}$. 
\end{lemma}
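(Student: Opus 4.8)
The plan is to deduce \eqref{eq15} from two ingredients: the $L^p(d\sigma)$-boundedness of the maximal truncated Riesz transform, which in turn comes from classical Calder\'on--Zygmund theory on the space of homogeneous type $(\pom,\sigma)$ seeded by the $L^2$ bound built into the definition of uniform rectifiability; and a pointwise Cotlar-type domination of the nontangential maximal function $\mathcal{N}(\nabla\mathcal{S}f)$ by that maximal truncated operator plus the Hardy--Littlewood maximal function.

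First I would note that since $\pom$ is UR, Definition \ref{defur} supplies the uniform $L^2(\sigma)$ bound \eqref{eqrtbound} for the truncated Riesz transforms $\mathcal{R}_\eps$. The Riesz kernel $\mathcal{K}(x)=\tilde c_n x/|x|^{n+1}$ is an antisymmetric standard Calder\'on--Zygmund kernel and $\sigma$ is doubling (Ahlfors regularity), so the usual Calder\'on--Zygmund apparatus on spaces of homogeneous type---a Cotlar inequality, the weak-$(1,1)$ estimate, Marcinkiewicz interpolation and duality---upgrades \eqref{eqrtbound} to
\begin{equation*}
\big\lVert \mathcal{R}_\ast f \big\rVert_{L^p(d\sigma)} \le C_p\, \lVert f\rVert_{L^p(d\sigma)}, \qquad 1<p<\infty,
\end{equation*}
where $\mathcal{R}_\ast f:=\sup_{\eps>0}|\mathcal{R}_\eps f|$. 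This is the only step in which uniform rectifiability is genuinely used.

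The second step is the pointwise bound, for $x\in\pom$,
\begin{equation*}
\mathcal{N}(\nabla\mathcal{S}f)(x) \le C\big( \mathcal{R}_\ast f(x) + M_\sigma f(x)\big),
\end{equation*}
with $M_\sigma$ the Hardy--Littlewood maximal operator on $(\pom,\sigma)$ and $C=C(n,\alpha,AR)$. To prove it, fix $x\in\pom$ and $Y\in\Gamma_\alpha(x)$; by \eqref{NTapp1} the quantity $\rho:=\delta(Y)$ satisfies $\rho\approx|Y-x|\approx\dist(Y,\pom)$. Recalling $\nabla\mathcal{S}f(Y)=\mathcal{R}f(Y)$, I would compare $\mathcal{R}f(Y)$ with the truncated transform $\mathcal{R}_\rho f(x)$: split $\pom$ into $\{|y-x|\le C\rho\}$, where the crude bound $|\mathcal{K}(Y-y)|\lesssim\rho^{-n}$ and Ahlfors regularity produce a term controlled by $M_\sigma f(x)$, and its complement, where the standard smoothness estimate $|\mathcal{K}(Y-y)-\mathcal{K}(x-y)|\lesssim \rho\,|x-y|^{-n-1}$ summed over the dyadic annuli $\{2^k\rho<|x-y|\le 2^{k+1}\rho\}$ again yields $M_\sigma f(x)$; the leftover term is precisely $|\mathcal{R}_\rho f(x)|\le\mathcal{R}_\ast f(x)$. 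Taking the supremum over $Y\in\Gamma_\alpha(x)$ gives the claim.

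Combining the two steps with the $L^p(d\sigma)$-boundedness of $M_\sigma$ on the doubling space $(\pom,\sigma)$ yields \eqref{eq15}, with the stated dependence of the constant. I expect the only genuine obstacle to be bookkeeping in the kernel comparison of step two---making sure the aperture $\alpha$ enters only through the comparability constants among $\rho$, $|Y-x|$ and $\dist(Y,\pom)$---since step one is a direct appeal to Definition \ref{defur} and the soft Calder\'on--Zygmund theory on spaces of homogeneous type (this is exactly the route taken in \cite{DavidandSemmes1} and \cite{hofmannmitreataylor}).
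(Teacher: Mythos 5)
Your proposal is correct and follows exactly the route the paper indicates: the paper gives no detailed proof, remarking only that \eqref{eq15} follows from uniform bounds for the truncated singular integrals plus a standard Cotlar lemma argument (citing \cite{DavidandSemmes1} and \cite[Proposition 3.20]{hofmannmitreataylor}), and your two steps---upgrading \eqref{eqrtbound} to $L^p$ bounds for the maximal truncated Riesz transform via Calder\'on--Zygmund theory on the space of homogeneous type $(\pom,\sigma)$, then the pointwise domination of $\mathcal{N}(\nabla\mathcal{S}f)$ by $\mathcal{R}_\ast f + M_\sigma f$ via the near/far kernel comparison in the cone---are precisely that argument written out.
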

Estimate \eqref{eq15} is essentially proved in \cite{DavidandSemmes1}; bounds for the non-tangential maximal
function of $\nabla \mathcal{S} f$ follow from uniform bounds for the truncated singular integrals, plus a standard 
Cotlar Lemma argument;  the details may be found in \cite[Proposition 3.20]{hofmannmitreataylor}.

In addition, we have the following result proved in \cite{hofmannmitreataylor}.
\begin{lemma}[\cite{hofmannmitreataylor} Proposition 3.30]\label{hmtlemma} 
If $\om$ is a UR domain (recall Definition \ref{URdom}), 
then for  a.e. $x \in \pom$, and for all $f \in L^p(d\sigma)$, $1<p<\infty$,
\begin{equation}\label{eq16}
\lim_{\substack{Z \to x \\ Z \in \Gamma^-(x)}}\nabla \mathcal{S} f (Z) = -\frac{1}{2}\nu(x)f(x) + Tf(x)\,,
\end{equation}
and 
\begin{equation}\label{eq30}
\lim_{\substack{Z \to x \\ Z \in \Gamma^+(x)}}\nabla \mathcal{S} f (Z) = \frac{1}{2}\nu(x)f(x) + Tf(x)\,.
\end{equation}
where $T$ is a principal value singular integral operator, $\Gamma^+(x)$ is the cone at 
$x$ relative to $\om$, $\Gamma^-(x)$ is the cone at $x$ relative to ${\Omega_{\rm ext}}$, and $\nu$ is the 
outer normal to $\om$. 
\end{lemma}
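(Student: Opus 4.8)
The statement is the standard jump relation for the gradient of the single layer potential, and the plan is to derive it from three facts available in the uniformly rectifiable setting: the nontangential maximal function bound \eqref{eq15}; the identity $\nabla\s f=\R f$ off $\pom$ from Definition \ref{riesztransdef}; and the $\sigma$-a.e.\ existence of the principal value $Tf(x):=\mathrm{p.v.}\int_{\pom}\mathcal{K}(x-y)f(y)\,d\sigma(y)$, which for UR sets follows from the $L^2$-boundedness of $\R$ (the defining property of UR, Definition \ref{defur}) together with the a.e.\ existence theory for Calder\'on--Zygmund operators with respect to Ahlfors regular measures. Thus it suffices to analyze the nontangential limits of $\R f$ from the two sides.

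First I would fix a point $x\in\partial^*\om$ that is simultaneously a Lebesgue point of $f$ with respect to $\sigma$, a point at which $Tf(x)$ exists, and a point at which the measure-theoretic blow-ups of $\pom$ converge to the hyperplane $x+\nu(x)^{\perp}$; since $\om$ is a UR domain, $\sigma(\pom\setminus\partial_*\om)=0$ and $\partial_*\om$ and $\partial^*\om$ agree up to a $\sigma$-null set, so $\sigma$-a.e.\ point of $\pom$ has all of these properties. Then decompose $f=f\chi_{B(x,\rho)}+f\chi_{\pom\setminus B(x,\rho)}$. The ``far'' piece produces $\R(f\chi_{\pom\setminus B(x,\rho)})$, which is harmonic (indeed real-analytic) in a neighborhood of $x$, so its nontangential limit at $x$ exists and equals $\int_{\pom\setminus B(x,\rho)}\mathcal{K}(x-y)f(y)\,d\sigma(y)$, which tends to $Tf(x)$ as $\rho\downarrow0$; this piece carries no jump, so the entire jump must come from the ``near'' piece.

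For the near piece I would write $f\chi_{B(x,\rho)\cap\pom}=f(x)\chi_{B(x,\rho)\cap\pom}+(f-f(x))\chi_{B(x,\rho)\cap\pom}$. The contribution of $(f-f(x))\chi_{B(x,\rho)\cap\pom}$ to $\nabla\s$, evaluated along the cone at $x$, is dominated by a quantity that goes to $0$ with $\rho$ because $x$ is a Lebesgue point of $f$; here the maximal function estimate \eqref{eq15}, applied after a Whitney-type decomposition of the cone into dyadic annuli about $x$, is exactly what makes this error uniformly small along the cone rather than merely on average. It remains to understand $f(x)\,\nabla\s\big(\chi_{B(x,\rho)\cap\pom}\big)(Z)$ as $Z\to x$ from $\Gamma^+(x)$ and from $\Gamma^-(x)$: comparing $B(x,\rho)\cap\pom$ to the flat disk $B(x,\rho)\cap(x+\nu(x)^{\perp})$ (close in the measure-theoretic sense at small scales, with nearly equal surface measures) and invoking the elementary half-space computation that the gradient of the Newtonian single layer of a constant density over a flat disk has normal component jumping by $\pm\tfrac12$ across the disk, with the signs as in \eqref{eq16}--\eqref{eq30} according to the $\om$-side and the exterior side while the tangential components stay continuous, one gets that this term tends to $\pm\tfrac12\nu(x)f(x)$ plus an error vanishing with $\rho$. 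Letting $\rho\downarrow0$ assembles \eqref{eq16} and \eqref{eq30}.

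The main obstacle is this last identification: making rigorous the comparison between $\nabla\s$ of the rough boundary piece near $x$ and $\nabla\s$ of the flat approximating disk, uniformly over the nontangential cone and uniformly in the small defect from flatness at scale $\rho$. The cleanest way to organize it is a blow-up/compactness argument — rescale by $\rho$ about $x$, use that the rescaled domains converge to a half-space and that $\R$ applied to the rescaled truncated densities converges to the corresponding half-space object — which is essentially the mechanism behind Proposition 3.30 of \cite{hofmannmitreataylor}. A second, genuinely deep, input used above without comment is the $\sigma$-a.e.\ existence of $Tf(x)$ on UR sets, which rests on the quantitative Calder\'on--Zygmund theory underpinning Definition \ref{defur}.
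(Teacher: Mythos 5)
First, a point of comparison: the paper offers no proof of this lemma at all --- it is imported wholesale as Proposition 3.30 of \cite{hofmannmitreataylor} (with only a remark about the sign convention for $\mathcal E$), so there is no in-paper argument to measure your proposal against. Judged on its own, your sketch follows the classical route to jump relations (far/near splitting, Lebesgue points, blow-up to a half-space at points of $\partial^*\om$), which is the right shape; but two of its steps are genuinely open rather than merely compressed, and one of them fails as stated.

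The step that fails is the near-piece error estimate. For $g_\rho=(f-f(x))\chi_{B(x,\rho)\cap\pom}$ and $Z\in\Gamma^\pm(x)$ with $t:=\delta(Z)\approx|Z-x|$, the only pointwise tool available from size alone is $|\nabla\s g_\rho(Z)|\lesssim\int_{B(x,\rho)}|g_\rho(y)|\,|Z-y|^{-n}\,d\sigma(y)$, and summing over the dyadic annuli $2^kt\le|x-y|<2^{k+1}t$ gives, by Ahlfors regularity, a bound of the form $\log(\rho/t)\cdot\sup_{s\le 2\rho}\fint_{B(x,s)}|f-f(x)|\,d\sigma$. The kernel is exactly critical for an $n$-dimensional measure, so the number of annuli contributes a logarithm that blows up as $Z\to x$ for each fixed $\rho$; Lebesgue-point smallness of the averages does not beat it. The estimate \eqref{eq15} cannot rescue this either: it is an $L^p$ bound on $\mathcal N(\nabla\s g_\rho)$ over all of $\pom$ and carries no information at the one prescribed point $x$. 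The standard repairs are (a) a pointwise Cotlar-type comparison of $\nabla\s f(Z)$, $Z\in\Gamma^\pm(x)$, with the truncated transform $\mathcal R_tf(x)$ plus maximal functions, combined with the $\sigma$-a.e.\ existence of $\lim_{\eps\to0}\mathcal R_\eps f(x)$ on UR sets (itself a deep fact you invoke but do not supply); or (b) proving the limits first on a dense class --- e.g.\ H\"older $f$, where $|f(y)-f(x)|\lesssim|y-x|^\alpha$ makes the near piece absolutely convergent and kills the logarithm --- and then passing to all of $L^p$ via the usual oscillation argument $\osc_{\Gamma^\pm(x)}(\nabla\s f)\le 2\,\mathcal N\bigl(\nabla\s(f-g)\bigr)(x)$ together with \eqref{eq15}. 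Your sketch names both ingredients but assembles neither.

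The second gap you flag yourself: the identification of the $\pm\tfrac12\nu(x)f(x)$ jump is the heart of the proposition and is left as ``the main obstacle.'' A comparison with a flat disk is delicate to run directly, because $\nabla\s$ of a characteristic function is again a critically singular object and small symmetric-difference errors do not obviously produce small errors in $\nabla\s$ uniformly over the cone. The mechanism used in the literature is different: one applies the Gauss--Green formula for sets of locally finite perimeter (available here since $\sigma(\pom\setminus\partial_*\om)=0$) to the field $\nabla_y\mathcal E(Z-y)$ over $\om$, which yields an exact identity expressing $\chi_{\om}(Z)$ as a boundary integral against $\nu$; this gives the unit jump of the relevant divergence-form quantity across $\pom$ exactly, and the blow-up of $\om$ to the half-space $\{y:\langle y-x,\nu(x)\rangle<0\}$ at points of $\partial^*\om$ then distributes that jump as $\pm\tfrac12\nu(x)$ in \eqref{eq16}--\eqref{eq30}. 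So the proposal is a reasonable roadmap to Proposition 3.30, but the two places where the analytic work actually happens are not closed.
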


\begin{remark} As in \cite{bortzhofmann}, we have taken our fundamental solution to be positive, so 
for that reason there are some changes in sign in both \eqref{eq16} and \eqref{eq30} 
as compared to the formulation in \cite{hofmannmitreataylor}. \end{remark}


Now we recall several lemmas from \cite{kenigtoroannsci} and \cite{bortzhofmann}, most of which require no modification (we indicate the necessary adjustments when this is not the case). The following lemma is a direct result of the John-Nirenberg inequality, which continues to hold for Ahlfors regular sets (see \cite[Corollary 2.19, p. 409]{GR}). 

 \begin{lemma}\label{RH4est.lem} Let $\om$ be a UR domain with $\diam(\pom) = 1$ and let $0 < r_0 < 1$. Let $f \ge 0$. There exists $\kappa_1= \kappa_1(n,AR)$ and $C = C(n,AR, r_0)$ such that if 
$\lVert \log f \rVert_*(r_0) < \kappa_1$ then 
\begin{equation}\label{eq2}
\left(\fint_{\Delta} f^4 \, d\sigma\right)^{\frac{1}{4}} \le C \fint_\Delta f\, d\sigma,
\end{equation}
for all $\Delta = \Delta(x,r) = B(x,r) \cap \pom$ with $x \in \pom$, $r \in (0,1)$. That is, $f \in RH_4(d\sigma)$. 
\end{lemma}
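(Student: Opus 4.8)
The plan is to prove \eqref{eq2} in two steps, according to the size of $r$ relative to $r_0$. Write $g:=\log f$ and $g_\Delta:=\fint_\Delta g\,d\sigma$. For $r$ below a dimensional multiple of $r_0$ the estimate is a direct application of the John--Nirenberg inequality on the space of homogeneous type $(\pom,|\cdot|,\sigma)$; the point is just that $\lVert\log f\rVert_*(r_0)$ controls $\fint_B|g-g_B|\,d\sigma$ on every surface ball $B$ of radius below $r_0$, which is exactly what is needed to run the Calder\'on--Zygmund stopping-time proof of John--Nirenberg inside a fixed surface ball of radius $\le cr_0$ (all stopping cubes and their parents remain in a fixed dilate of that ball, hence have radius below $r_0$). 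For larger $r$ one reduces to the small-scale case by a covering argument, and this is the step that forces the final constant to depend on $r_0$.

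\emph{Small scales.} The above yields constants $c_1,c_2$, depending only on $n$ and the Ahlfors regularity constant, with
\begin{equation*}
\fint_\Delta \exp\!\left(\frac{c_1\,|g-g_\Delta|}{\lVert g\rVert_*(r_0)}\right)d\sigma \le c_2
\end{equation*}
for every surface ball $\Delta$ of radius $\le cr_0$. Put $\kappa_1:=c_1/8$, which depends only on $n$ and the $AR$ constant. If $\lVert g\rVert_*(r_0)<\kappa_1$ then $\fint_\Delta e^{8|g-g_\Delta|}\,d\sigma\le c_2$, hence
\begin{equation*}
\fint_\Delta f^4\,d\sigma = e^{4g_\Delta}\fint_\Delta e^{4(g-g_\Delta)}\,d\sigma \le c_2\,e^{4g_\Delta}, \qquad \fint_\Delta f\,d\sigma = e^{g_\Delta}\fint_\Delta e^{g-g_\Delta}\,d\sigma \ge e^{g_\Delta},
\end{equation*}
the last inequality being Jensen applied to $e^{g-g_\Delta}$ (note $\fint_\Delta(g-g_\Delta)\,d\sigma=0$). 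Combining, $\big(\fint_\Delta f^4\,d\sigma\big)^{1/4}\le c_2^{1/4}\fint_\Delta f\,d\sigma$; the identical computation with $8$ in place of $4$ also gives $\big(\fint_\Delta f^8\,d\sigma\big)^{1/8}\le c_2^{1/4}\fint_\Delta f\,d\sigma$. Thus \eqref{eq2}, and its $RH_8$ analogue, hold with a dimensional constant $C_0$ on all surface balls of radius $\le cr_0$.

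\emph{Large scales.} Fix $\Delta=\Delta(x,r)$ with $cr_0<r<1$ and let $\{y_k\}_{k=1}^{N}\subset\Delta$ be a maximal $cr_0$-separated subset. By Ahlfors regularity the surface balls $\widetilde\Delta_k:=\Delta(y_k,cr_0)$ cover $\Delta$, lie in $\Delta(x,2r)$, overlap with multiplicity bounded by a dimensional constant, and obey $\sigma(\widetilde\Delta_k)\gtrsim r_0^{n}$ and $N\lesssim r_0^{-n}$. Applying the small-scale bound on each $\widetilde\Delta_k$, summing, and using Ahlfors regularity to compare $\sigma(\widetilde\Delta_k)$, $\sigma(\Delta)$ and $\sigma(\Delta(x,2r))$ (all comparable up to factors $C(n,AR,r_0)$ since $cr_0\le r\le 1$), one obtains
\begin{equation*}
\fint_\Delta f^4\,d\sigma \le C(n,AR,r_0)\left(\fint_{\Delta(x,2r)} f\,d\sigma\right)^{\!4},
\end{equation*}
and likewise with $4$ replaced by $8$. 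This is \eqref{eq2} up to the enlarged ball on the right; the enlargement is removed in a standard way by the self-improvement of reverse Hölder inequalities (using the genuine, undilated estimate already in hand at scales $\le cr_0$ to anchor the iteration), and in any event it is harmless in all applications, where $f\,d\sigma$ is a doubling measure --- a Poisson kernel times surface measure --- so that $\fint_{\Delta(x,2r)}f\,d\sigma\lesssim\fint_\Delta f\,d\sigma$ directly.

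The small-scale step is exactly John--Nirenberg, as the authors indicate; the substance lies in the large-scale step. Since $\lVert\log f\rVert_*(r_0)$ gives no information above scale $r_0$, no local argument is available there and \eqref{eq2} must be assembled from the small balls, which both produces the dependence of $C$ on $r_0$ (configurations built from two well-separated Ahlfors regular pieces on which $f$ takes very different sizes show that no dimensional constant can work, while $\kappa_1$ stays dimensional) and forces the passage through dilated balls. I do not expect any other genuine obstacle; in particular no connectivity or topological assumption on $\pom$ enters, only Ahlfors regularity.
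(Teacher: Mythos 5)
Your small-scale argument is correct and takes a more hands-on route than the paper: the paper invokes John--Nirenberg to place $e^{\eta\log f}$ in $A_2\subset A_\infty$ for $\eta>4$ and then quotes the Cruz-Uribe--Neugebauer duality $f\in RH_p\Leftrightarrow f^p\in A_\infty$, whereas you extract the exponential integrability of $\log f-(\log f)_\Delta$ directly and finish with Jensen. Both hinge on the same localized John--Nirenberg inequality on the space of homogeneous type $(\pom,\sigma)$, and your version is self-contained and gives the dimensional $\kappa_1$ exactly as claimed. Up to scale $\approx r_0$ there is nothing to object to.

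The genuine gap is in your large-scale step, and specifically in the sentence claiming the dilated ball can be removed ``in a standard way by the self-improvement of reverse H\"older inequalities.'' That is not a standard fact and it is false here: a weak reverse H\"older inequality (with $\Delta(x,2r)$ on the right) upgrades in the \emph{exponent} (Gehring), but it does not upgrade to the undilated inequality without doubling of $f\,d\sigma$, and no amount of anchoring at small scales fixes this. Concretely, let $\pom$ be the union of two circles at mutual distance $d$ with $\diam(\pom)=1$, let $f\equiv 1$ on one and $f\equiv\lambda$ on the other; then $\lVert\log f\rVert_*(r_0)=0$ for $r_0<d$, yet for $\Delta=\Delta(x,r)$ with $x$ on the first circle and $r$ barely exceeding $d$ one can arrange $\sigma(\Delta\cap\{f=\lambda\})\approx\lambda^{-2}$, so that $\fint_\Delta f^4\approx\lambda^2$ while $\fint_\Delta f\approx 1$. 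So the strong inequality \eqref{eq2} at scales $r>r_0$ simply cannot be derived from the hypotheses for a general $f\ge 0$ on a general UR domain, and your proposed repair of that step does not work. To your credit, you have located exactly where the content (and the trouble) lies --- the paper's own one-line proof is silent on scales above $r_0$, where John--Nirenberg gives no $A_2$ control either --- and your two fallback remarks are the honest resolutions: either restrict the conclusion to $r\lesssim r_0$ (which is all that Lemma \ref{Lemma 2} actually uses), or assume/establish doubling of $f\,d\sigma$ as holds for the Poisson kernels in the application. But as a proof of the lemma as literally stated, the argument does not close, and the missing step is not a routine one.
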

To prove the Lemma, as in the case of the Euclidean space the John-Nirenberg inequality allows us to show that if $b = \log f$, $e^{\eta b} \in A_2 \subset A_\infty$ where $A_2$ is the Muckenhaupt class where $\eta= \eta(n, AR)$ and $\eta \to \infty$ as $\lVert b \rVert_*(r_0) \to 0$. Since $f \in RH_p$ if and only $f^p \in A_\infty$ (see \cite{C-UrN}) we then obtain that $f \in RH_4$, provided $\eta > 4$. 
\begin{remark}
The reverse H\"older estimate \eqref{eq2} 
(i.e., the $A_\infty$ property) yields an exponential reverse Jensen inequality, so that
for any $\Delta$ as in  \eqref{eq2},
\begin{equation}\label{eq6}
e^{\,\fint_{\Delta} \log f \, d \sigma} \approx \fint_{\Delta} f \, d \sigma = \frac{1}{\sigma(\Delta)} \int_{\Delta} f \, d \sigma.
\end{equation}
See \cite[Theorem 2.15, p. 405]{GR}.
\end{remark}

\begin{lemma}{\cite[Lemma 1.16]{bortzhofmann}}\label{Lemma 2}
Let $\om$ be a UR domain with $\diam(\pom) = 1$. Let $f \ge 0$ with $\lVert \log f \rVert_*(r_0) < \kappa_1$, where $\kappa_1$ is as in Lemma \ref{RH4est.lem}. For $\Delta^* := \Delta(x,s) =  B(x,s) \cap \pom$ with $0 \le s \le r_0$ and $x \in \pom$, set
$$a_{x,s} := e^{\fint_{\Delta^*} \log f \, d\sigma}.$$
There exists $C = C(n, AR, r_0)$ such that 
\begin{equation}\label{eq7}
\left(\fint_{\Delta^*} \left|1 - \frac{f}{a} \right|^2 \, d \sigma \right)^{1/2} \le C \left(\lVert \log f \rVert_{*}(r_0)\right)^{1/8} ,
\end{equation}

\end{lemma}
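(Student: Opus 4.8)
The plan is to reduce the $L^2$ estimate to the exponential integrability of $\log f$ on the single surface ball $\Delta^*$, which is precisely the $A_\infty$ information already extracted in Lemma~\ref{RH4est.lem}. Write $b:=\log f$, $b_{\Delta^*}:=\fint_{\Delta^*}b\,d\sigma$, so that $a_{x,s}=e^{b_{\Delta^*}}$ and, setting $g:=b-b_{\Delta^*}$, one has $f/a_{x,s}=e^{g}$ with $\fint_{\Delta^*}g\,d\sigma=0$. Put $\eta_0:=\lVert\log f\rVert_*(r_0)$. The whole argument rests on two facts: (i) directly from the definition of $\lVert\cdot\rVert_*$ and the hypothesis $s\le r_0$, $\fint_{\Delta^*}|g|^2\,d\sigma\le\eta_0^2$; and (ii) a uniform bound $\fint_{\Delta^*}e^{4|g|}\,d\sigma\le C$ with $C=C(n,AR,r_0)$.

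For (ii) one notes that $\lVert\log(1/f)\rVert_*(r_0)=\lVert\log f\rVert_*(r_0)<\kappa_1$, so Lemma~\ref{RH4est.lem} applies both to $f$ and to $1/f$; writing $f=a_{x,s}e^{g}$ and clearing the common powers of $a_{x,s}$, these reverse-H\"older inequalities read $\fint_{\Delta^*}e^{4g}\,d\sigma\le C\big(\fint_{\Delta^*}e^{g}\,d\sigma\big)^{4}$ and $\fint_{\Delta^*}e^{-4g}\,d\sigma\le C\big(\fint_{\Delta^*}e^{-g}\,d\sigma\big)^{4}$. By the exponential reverse-Jensen inequality \eqref{eq6}, again used for $f$ and for $1/f$, $\fint_{\Delta^*}e^{\pm g}\,d\sigma\approx 1$; hence $\fint_{\Delta^*}e^{\pm 4g}\,d\sigma\le C(n,AR,r_0)$, and since $e^{4|g|}\le e^{4g}+e^{-4g}$, (ii) follows. (Alternatively, since the local BMO constant of $g$ on $\Delta^*$ is controlled by $\eta_0$, one may simply invoke the John--Nirenberg inequality on the space of homogeneous type $\pom$, cf.\ \cite[Corollary~2.19]{GR}.)

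With (i) and (ii) in hand I would split $\Delta^*$ according to the size of $|g|$. On $\{|g|\le1\}$ the elementary bound $|1-e^{t}|\le e\,|t|$ for $|t|\le1$ together with (i) gives $\frac{1}{\sigma(\Delta^*)}\int_{\{|g|\le1\}}|1-e^{g}|^{2}\,d\sigma\le e^{2}\eta_0^{2}$. On $\{|g|>1\}$ one has $|1-e^{g}|^{2}\le 4e^{2|g|}$, so the Cauchy--Schwarz inequality, the Chebyshev bound $\sigma(\{|g|>1\}\cap\Delta^*)\le\eta_0^{2}\,\sigma(\Delta^*)$ coming from (i), and (ii) give $\frac{1}{\sigma(\Delta^*)}\int_{\{|g|>1\}}|1-e^{g}|^{2}\,d\sigma\le 4\eta_0\big(\fint_{\Delta^*}e^{4|g|}\,d\sigma\big)^{1/2}\le C\eta_0$. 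Adding the two contributions, $\fint_{\Delta^*}|1-f/a_{x,s}|^{2}\,d\sigma\le C(\eta_0^{2}+\eta_0)\le C\eta_0$ once $\eta_0<\kappa_1<1$; taking square roots and using $t^{1/2}\le t^{1/8}$ for $t\in(0,1)$ yields $\big(\fint_{\Delta^*}|1-f/a_{x,s}|^{2}\,d\sigma\big)^{1/2}\le C(n,AR,r_0)\,\eta_0^{1/8}$, which is \eqref{eq7}. (In fact this gives the sharper exponent $1/2$, but only $1/8$ is needed downstream.) The one point that deserves care — the ``main obstacle'', such as it is — is to make sure the constants in (ii), equivalently in Lemma~\ref{RH4est.lem} and \eqref{eq6}, are genuinely uniform over all surface balls $\Delta^*=\Delta(x,s)$, $x\in\pom$, $s\le r_0<1$, and do not degenerate as $s\to0$; this is exactly the uniformity asserted in those results (constants depending only on $n$, $AR$, $r_0$). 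Everything else is routine.
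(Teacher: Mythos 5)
Your proof is correct, and it is essentially the argument that the paper is pointing to: the paper's ``proof'' simply defers to \cite[Lemma 1.16]{bortzhofmann} with $p=2$ and with $\epsilon$ replaced by $\lVert \log f\rVert_*(r_0)$, and the underlying mechanism there is the same one you use --- exponential integrability of $g=\log f-\fint_{\Delta^*}\log f\,d\sigma$ from the reverse H\"older/$A_\infty$ information of Lemma \ref{RH4est.lem} (applied to both $f$ and $1/f$) together with \eqref{eq6}, followed by a splitting of $\Delta^*$ according to the size of $|g|$ and Chebyshev on the bad set. Two small remarks: first, because the hypothesis here controls the $L^2$ (rather than $L^1$ or $L^p$) mean oscillation of $\log f$ at every scale $s\le r_0$, your argument indeed yields the exponent $1/2$, which trivially implies the stated $1/8$ once $\kappa_1\le 1$ (the exponent $1/(4p)$ in the cited lemma is what survives for general $p$; nothing downstream needs more than $1/8$). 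Second, the endpoint case $s=r_0$ is not literally covered by the strict supremum $0<s<R$ in \eqref{eqn:localizedosc}, but this is a vacuous issue (monotone convergence in $s\uparrow r_0$, or a doubling adjustment of $r_0$), and your explicit check that the constants in (ii) are uniform over all centers $x$ and radii $s\le r_0$ is exactly the point that needs to be, and is, guaranteed by Lemma \ref{RH4est.lem} and \eqref{eq6}.
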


\begin{proof} Following the proof of \cite[Lemma 1.16]{bortzhofmann} {\it verbatim}, we replace $\epsilon$ with $\lVert \log f \rVert_{*}(r_0)$ and set $p = 2$. We also note that in the last estimate in the proof of \cite[Lemma 1.16]{bortzhofmann} it is required that $f \in RH_{2p} = RH_4$, at this point we apply Lemma \ref{RH4est.lem}. 
\end{proof}

If we place additional smallness assumptions on the $\|\cdot\|_*$-norm of $\log f$ we obtain the additional comparability estimate below. The proof of this lemma is identical to \cite[Corollary 2.4]{kenigtoroannsci} , appealing to the fact that we may place $f \in RH_p$ for any $p> 1$ provided we force $\log f$ to have small enough $\lVert \cdot \rVert_*$-norm.

\begin{lemma}[\cite{kenigtoroannsci} Corollary 2.4]\label{Lemma 1} Suppose that
Let $\om$ be a UR domain with $\diam(\pom) = 1$ and let $0 < r_0 < 1$. There exists $\kappa_2= \kappa_2(n,AR)$ and $C = C(n,AR, r_0)$ such that if $\lVert f \rVert_*(r_0) < \kappa_2$ and $\mu(A) = \int_A f \, d\sigma$ then
\begin{equation}\label{eq4}
\begin{split}
C^{-1}\left(\frac{\sigma(E)}{\sigma(\Delta^\star)} \right)^{1+(1/2n)} \le \frac{\mu(E)}{\mu(\Delta^\star)} \le 
C \left(\frac{\sigma(E)}{\sigma(\Delta^\star)} \right)^{1-(1/2n)},
\end{split}
\end{equation}
for all surface balls $\Delta^\star$ and $E \subset \Delta^\star$.
\end{lemma}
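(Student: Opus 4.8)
The plan is to reduce the statement to classical Muckenhoupt weight theory on the space of homogeneous type $(\pom,\sigma)$, following \cite[Corollary 2.4]{kenigtoroannsci} essentially verbatim. (As the surrounding discussion makes clear, the smallness hypothesis should be read as $\lVert \log f \rVert_*(r_0) < \kappa_2$; this is what is actually used.) First I would record the weight-theoretic input: since $\pom$ is Ahlfors regular, the John--Nirenberg inequality is available on $(\pom,\sigma)$, and --- exactly as in the proof of Lemma \ref{RH4est.lem}, where this is carried out for the exponent $4$ --- if $\lVert \log f \rVert_*(r_0)$ is small enough in terms of $n$ and $AR$, then $f$ lies in $RH_{2n}(d\sigma)$ and in $A_{1+\frac1{2n}}(d\sigma)$, with the $RH_{2n}$ constant and $[f]_{A_{1+\frac1{2n}}}$ depending only on $n$, $AR$ and $r_0$. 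Concretely, writing $b=\log f$, small $\lVert b\rVert_*(r_0)$ forces $e^{\eta b}\in A_2(d\sigma)$ for some $\eta=\eta(n,AR)$ with $\eta\to\infty$ as $\lVert b\rVert_*(r_0)\to 0$; covering and chaining balls of radius $>r_0$ by balls of radius $r_0$ (here is where the dependence on $r_0$ of the constants, and only the constants, enters, since $\diam(\pom)=1$) upgrades this to the claimed global $RH_{2n}$ and $A_{1+\frac1{2n}}$ membership once $\kappa_2=\kappa_2(n,AR)$ is chosen small enough. This is the only place the smallness of $\lVert \log f\rVert_*(r_0)$ is used.

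Granting this, the two inequalities in \eqref{eq4} are two short applications of H\"older's inequality. Fix a surface ball $\Delta^\star$ and $E\subset\Delta^\star$. For the right-hand inequality, H\"older with exponents $2n$ and $\tfrac{2n}{2n-1}$ followed by the reverse H\"older estimate for $f$ give
\[
\mu(E)=\int_E f\,d\sigma \le \Big(\int_{\Delta^\star}f^{2n}\,d\sigma\Big)^{\frac1{2n}}\sigma(E)^{1-\frac1{2n}} \le C\,\mu(\Delta^\star)\Big(\tfrac{\sigma(E)}{\sigma(\Delta^\star)}\Big)^{1-\frac1{2n}}.
\]
For the left-hand inequality, set $q=1+\tfrac1{2n}$, write $\sigma(E)=\int_E f^{1/q}f^{-1/q}\,d\sigma$, apply H\"older with exponents $q$ and $q'$, and then bound $\int_{\Delta^\star}f^{-1/(q-1)}\,d\sigma$ using the defining $A_q$ inequality for $f$ (which controls $\big(\fint_{\Delta^\star}f^{-1/(q-1)}\,d\sigma\big)^{q-1}$ by $[f]_{A_q}\,\sigma(\Delta^\star)/\mu(\Delta^\star)$); after simplification this yields $\sigma(E)/\sigma(\Delta^\star)\le C\,(\mu(E)/\mu(\Delta^\star))^{1/q}$, i.e. $\mu(E)/\mu(\Delta^\star)\ge C^{-1}(\sigma(E)/\sigma(\Delta^\star))^{1+\frac1{2n}}$, as desired.

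I expect the only real obstacle to be the first step --- the quantitative passage from single-scale smallness of $\lVert\log f\rVert_*(r_0)$ to global $RH_{2n}\cap A_{1+1/(2n)}$ membership with explicit control of the constant dependence. But this is precisely the self-improving John--Nirenberg mechanism already invoked (for $RH_4$) in Lemma \ref{RH4est.lem}, and in full generality it is classical (see, e.g., \cite{GR}); once the exponents $p=2n$ and $q=1+\tfrac1{2n}$ are fixed, nothing beyond the bookkeeping above is needed, so the argument really is identical to that of \cite[Corollary 2.4]{kenigtoroannsci}.
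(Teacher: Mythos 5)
Your proposal is correct and follows essentially the same route as the paper, which gives no proof beyond citing \cite[Corollary 2.4]{kenigtoroannsci} and remarking that small $\lVert \log f\rVert_*$-norm places $f$ in $RH_p$ for any $p>1$; your H\"older/$A_q$/$RH_{2n}$ computations are exactly the content of that citation, and you correctly identify that the hypothesis should read $\lVert \log f\rVert_*(r_0)<\kappa_2$. The scale-bridging from $r_0$ to $\diam(\pom)=1$ (the source of the $r_0$-dependence in $C$) is handled the same way as in Lemma \ref{RH4est.lem}, so nothing further is needed.
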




\begin{proof}[Proof of Theorem \ref{FBthrm.thrm}] By scaling we may assume $\diam(\pom^+) = 1$ and $0 < r_0 < 1$. Our proof will follow the main scheme of \cite{bortzhofmann} at first, in fact, our situation is slightly simpler.
Suppose first that 
$$\lVert \log k^+ \rVert_*(r_0),\lVert \log k^- \rVert_*(r_0) \le \min\{\kappa_1, \kappa_2\},$$
where $\kappa_1$ and $\kappa_2$  are the constants from Lemma \ref{RH4est.lem} and Lemma \ref{Lemma 2} respectively. By Lemma \ref{RH4est.lem}, $\hm^{X^+},\hm^{X^-} \in A_\infty$ and hence $\hm^{X^+}$ and $\hm^{X^-}$ are doubling measures. From this fact and Lemma \ref{uranddoublingimpliescorkscrews}, it follows that $\om^+$ and $\om^-$ satisfy the two-sided $(M,R_0)$-corkscrew condition with uniform constants depending on $n, UR, r_0,$ $\delta(X^+), \delta(X^-)$. 

Let $\lVert \log k^+\rVert_*(r_0), \lVert \log k^+\rVert_*(r_0) < \eta$ where $\eta$ is small to be chosen. 
 We first assume that $\eta \ll \min\{r_0, \delta(X^+), \delta(X^-)\}< 1$. Fix $Q \in \pom$ and $r \in (0,\eta)$ and set $\Delta = \Delta(Q,r)$. For $y, z \in \Delta$, let $y^*$, $z^*$ denote arbitrary points in $\Gamma^-(y) \cap B(y,r/2)$ and in $\Gamma^-(z) \cap B(z,r/2)$ respectively. Set $\Delta^* = \Delta(Q, \eta^{-1/(8n)}r)$, where we have chosen $\eta$ in such a way that the radius of $\Delta^*$ is (significantly) smaller than $\min\{r_0, \delta(X^+), \delta(X^-)\}$. As in \cite{bortzhofmann}, our immediate goal is to show
\begin{equation}\label{eq17}
\left( \fint_\Delta \left| \nabla \mathcal{S} 1_{\Delta^*}(z^\ast) - \fint_\Delta \nabla \mathcal{S} 1_{\Delta^*}(y^*) \, d\sigma(y) \right|^2 \, d \sigma(z) \right)^\frac{1}{2} \le C \eta^{\gamma}, 
\end{equation}
where $\gamma := 1/(8n)$ and $C= C(\delta, n, UR, r_0, \delta(X^+), \delta(X^-))$.

Set $k := k^+$, $\displaystyle a: = a_{x,\eta^{-1/(8n)}r} = e^{\,\fint_{\Delta^*} \log k \, d \sigma}$ and write
\begin{equation}\label{eq18}
1_{\Delta^*} = \left[\left(1 - \frac{k}{a}\right)1_{\Delta^*}\right] + \left[\frac{k}{a}\right] - \left[\left(\frac{k}{a}\right)1_{(\Delta*)^c}\right].
\end{equation}
Using \eqref{eq18} we have that the left hand side of \eqref{eq17} is bounded by the sum of three terms $\RNum{1}, \RNum{2}$ and $\RNum{3}$ where
\begin{equation}\label{eq19}
\RNum{1} = \left( \fint_\Delta \left| \nabla \mathcal{S} \left[\left(1 - \frac{k}{a}\right)1_{\Delta^*}\right] (z^\ast) - \fint_\Delta \nabla \mathcal{S} \left[\left(1 - \frac{k}{a}\right)1_{\Delta^*}\right](y^*) \, d\sigma(y) \right|^2 \, d \sigma(z) \right)^\frac{1}{2},
\end{equation}
\begin{equation}\label{eq20}
\RNum{2} = \left( \fint_\Delta \left| \nabla \mathcal{S} \left[\frac{k}{a}\right] (z^\ast) - \fint_\Delta \nabla \mathcal{S} \left[\frac{k}{a}\right](y^*) \, d\sigma(y) \right|^2 \, d \sigma(z) \right)^\frac{1}{2},
\end{equation}
and
\begin{equation}\label{eq21}
\RNum{3} = \left( \fint_\Delta \left| \nabla \mathcal{S} \left[\left(\frac{k}{a}\right)1_{(\Delta*)^c}\right] (z^\ast) - \fint_\Delta \nabla \mathcal{S} \left[\left(\frac{k}{a}\right)1_{(\Delta*)^c}\right](y^*) \, d\sigma(y) \right|^2 \, d \sigma(z) \right)^\frac{1}{2}.
\end{equation}

 We begin by estimating $\RNum{1}$.  By \eqref{eq15} and Lemma \ref{Lemma 2}, we have
\begin{multline}\label{eq23}
\RNum{1} \le  2 \left(\fint_\Delta \left| \mathcal{N} 
\left( \nabla \mathcal{S} \left[\left(1 - \frac{k}{a}\right)
1_{\Delta^*}\right]\right) \right|^2 \, d \sigma \right)^{\frac{1}{2}} \\*
\lesssim \, \left(\frac{\sigma(\Delta^*)}{\sigma(\Delta)}\right)^{1/2}
\left(\fint_{\Delta^\ast} 
\left| 
1 - \frac{k}{a} 
\right|^2 \, d \sigma \right)^{\frac{1}{2}}\,
\lesssim \, \eta^{\frac{-1}{16}} \eta^{\frac{1}{8}} \,\lesssim\, \eta^{\frac{1}{16}}.
\end{multline}
Now for $\RNum{2}$, we recall that $k = k^{X^+}$ is the Poisson kernel for $\om$ with pole at $X^+$.  Moreover,
$\mathcal{E}(\cdot - z^*)$ and $\mathcal{E}(\cdot - y^*)$ are 
harmonic in $\om$ since $z^*, y^* \in {\Omega_{\rm ext}}$, and decay to 0 at infinity, and are therefore equal 
to their respective Poisson integrals in $\om$.   Consequently,
\begin{equation}\label{eq24}
\RNum{2} 
\le \frac{1}{a}  \left( \fint_\Delta  \fint_\Delta \left| \nabla \mathcal{E}({X^+} - z^*)  - \nabla \mathcal{E}({X^+} - y^*) \, d\sigma(y) \right|^2 \, d \sigma(z) 
\right)^\frac{1}{2}\,.
\end{equation}
We now apply Lemma \ref{Lemma 1},
with $\Delta^\star = \Delta(Q, 1) = \pom$ and $E=\Delta^*$, to deduce that
\begin{equation}\label{eq2.8a}
\frac{\hm(\pom)}{\hm(\Delta^*)} = \frac{\hm(\Delta_0^\star)}{\hm(\Delta^*)} \lesssim 
\left(\frac{1}{\eta^{\frac{-1}{8n}}r}\right)^{n+1/2},
\end{equation}
where $\hm= k^+ \, d\sigma$.
Note that, since $y^*,z^* \in B(x,2r)$, 
\begin{equation*}
\left| \nabla \mathcal{E}({X^+} - z^*)  - \nabla \mathcal{E}({X^+} - y^*)   \right| 
\lesssim \frac{r}{\delta(X^+)^{n+1}} \approx r,
\end{equation*}
where we remind the reader that the implicit constants may depend on $\delta(X^+)$.
Then continuing \eqref{eq24}, we have, using \eqref{eq6} and 
\eqref{eq2.8a}  
\begin{multline}\label{eq38}
\begin{split}
\RNum{2}  &\lesssim \frac{1}{a} r 
\approx \frac{\sigma(\Delta^*)}{\hm(\Delta^*)} r  =\frac{\sigma(\Delta^*)}{\hm(\pom)}\frac{\hm(\pom)}{\hm(\Delta^*)} r 
\\ & \lesssim \,(\eta^{\frac{-1}{8n}}r)^n\left(\frac{1}{\eta^{\frac{-1}{8n}}r} \right)^{n+\frac{1}{2}}r  \lesssim 
\eta^{\frac{1}{16n}}r^{\frac{1}{2}}  \lesssim  \eta^{\frac{1}{8n}},
\end{split}
\end{multline}
where we have used the estimate $\hm(\pom) \gtrsim 1$ with implicit constants depending on $n$ AR and $\delta(X^+)$. To see the estimate $\hm(\pom) \gtrsim 1$, we appeal to Bourgain's estimate \cite{Bourgain}. Note that this holds for pole near the boundary, but by using the touching ball for the point $X^+$ and the Harnack inequality it holds with pole at $X^+$, albeit with constants depending on $\delta(X^+)$. Recall the touching ball for a point $X \in \om^+$ is the ball $B(X,R)$ with $R = \dist(X,\pom^+)$.

For $\RNum{3}$, we use basic Calder\'{o}n-Zygmund type estimates as follows.  Let 
$$\Delta_j' := \Delta(Q, 2^jr)\,,\qquad A'_j := \Delta_j' \setminus \Delta_{j-1}'\,,$$ 
so that
\begin{multline}\label{eq25}
\RNum{3} = \\ 
 \left( \fint_\Delta \left| \fint_\Delta 
 \left(\nabla \mathcal{S} \left[\left(\frac{k}{a}\right)
 1_{(\Delta*)^c}\right] (z^\ast) -  \nabla \mathcal{S} \left[\left(\frac{k}{a}\right)
 1_{(\Delta*)^c}\right](y^*)\right) \, d\sigma(y) \right|^2 \, d \sigma(z) \right)^\frac{1}{2} \\
 = \left( \fint_\Delta \left| \fint_\Delta \int_{\pom\setminus \Delta^*} \right[\nabla 
\mathcal{E}(z^* - w) -\nabla \mathcal{E}(y^*-w)\left]\frac{k(w)}{a} \, d\sigma(w)  
\, d\sigma(y) \right|^2 \, d \sigma(z) \right)^\frac{1}{2}  \\
\le \sum_{\{j\mid2^j \ge \eta^{-\frac{1}{8n}}\}} \left( \fint_\Delta \left[\fint_\Delta   
 \int_{A'_j} \left|\nabla \mathcal{E}(z^* - w) -\nabla \mathcal{E}(y^*-w)\right|\frac{k(w)}{a} \, d\sigma(w) 
  \, d\sigma(y) \right]^2 \, d \sigma(z) \right)^\frac{1}{2} \\
 \lesssim  \sum_{\{j\mid 2^j \ge \eta^{-\frac{1}{8n}}\}} \left( \fint_\Delta \left[ \fint_\Delta  \int_{A'_j} 
\frac{r}{(2^jr)^{n+1}}\frac{k(w)}{a} \, d\sigma(w)  \, d\sigma(y) \right]^2 \, d \sigma(z) \right)^\frac{1}{2},
\end{multline}
where we understand that the sums are finite and terminate for $2^j r  \ge \mathrm{diam}(\partial \Omega)= 1$.
We now apply Lemma \ref{Lemma 1}, with $\Delta^\star = \Delta_j'$ and $E = \Delta^*$, to obtain
\begin{equation}\label{eq2.12}
\frac{\hm(\Delta_j')}{\hm(\Delta^*)} \lesssim  \left(\frac{2^{j}}{\eta^{-\frac{1}{8n}}}\right)^{n+ 1/2} 
\end{equation}
We then have
\begin{multline*}
III \,\lesssim\,\,
 \sum_{\{j\mid 2^j \ge \eta^{-\frac{1}{8n}}\}} \, \frac{1}{2^j}
\left( \fint_\Delta \left[ \fint_\Delta  \fint_{\Delta_j'} \frac{k(w)}{a} \, 
d\sigma(w)  \, d\sigma(y) \right]^2 \, d \sigma(z) \right)^\frac{1}{2} \\
 \lesssim \, \sum_{\{j\mid 2^j \ge \eta^{-\frac{1}{8n}}\}} \, \frac{1}{2^j} \frac{1}{a}\frac{\hm(\Delta_j')}{\sigma(\Delta_j')}\,
\approx \, \sum_{\{j\mid 2^j \ge \eta^{-\frac{1}{8n}}\}} \, \frac{1}{2^j} \frac{\sigma(\Delta^*)}{\sigma(\Delta_j')} \frac{\hm(\Delta_j')}{\hm(\Delta^*)}\\
\lesssim\, \sum_{\{j\mid 2^j \ge \eta^{-\frac{1}{8n}}\}} 2^{-j}  \left(\frac{\eta^{-\frac{1}{8n}}}{2^{j}}\right)^n  \left(\frac{2^{j}}{\eta^{-\frac{1}{8n}}}\right)^{n+1/2} 
\\ \,\lesssim \eta^{\frac{1}{16n}}\sum_{\{j\mid 2^j \ge \eta^{-\frac{1}{8n}}\}} 2^{-j/2}  
 \,\lesssim \,\eta^{\frac{1}{8n}}\,,
\end{multline*}
where in the second line we have used  \eqref{eq6} and
in the second to last line the AR property and \eqref{eq2.12}.
Combining the estimates for $I$, $II$, $III$, we obtain \eqref{eq17}.

Setting
\begin{equation*}
\nt^- f(x) := \lim_{\substack{Z \to x \\ Z \in \Gamma^-(x)}} \nabla \mathcal{S}f(Z) \, ,
\end{equation*}
since the limit exists for a.e. $x\in \pom$ (see Lemma \ref{hmtlemma}),
we may now use \eqref{eq15},  \eqref{eq17}, and 
dominated convergence to obtain  
\begin{equation}\label{eq28}
\left( \fint_\Delta \left| \nt^- 1_{\Delta^*}(z) - \fint_\Delta \nt^- 1_{\Delta^*}(y) \, d\sigma(y) \right|^2 \, d \sigma(z) \right)^\frac{1}{2} \le C \eta^\gamma. 
\end{equation}

In addition, since $\lVert \log k^{-} \rVert_*(r_0) < \eta$, the same analysis shows that \eqref{eq28} holds for $\nt^- 1_{\Delta^*}$ replaced with
\begin{equation}\label{eq29}
\nt^+ 1_{\Delta^*} : = \lim_{\substack{Z \to x \\ Z \in \Gamma^+(x)}}\nabla\mathcal{S}1_{\Delta^*}(Z).
\end{equation}

By \eqref{eq16} and \eqref{eq30}
\begin{equation}\label{eq31}
\nu(x) 1_{\Delta^*}(x) = \lim_{\substack{Z \to x \\ Z \in \Gamma^+(x)}}\nabla\mathcal{S}1_{\Delta^*}(Z) - \lim_{\substack{Z \to x \\ Z \in \Gamma^-(x)}}\nabla\mathcal{S}1_{\Delta^*}(Z)\,.
\end{equation}
Thus, since $\Delta \subset \Delta^*$,
by \eqref{eq28} and its analogue for $\mathcal{S}^+$, we obtain for all $\Delta = \Delta(x,r)$ with $x \in \pom^+$ and $r \in (0,\eta)$
\begin{equation}\label{eq33}
\left( \fint_\Delta \left| \nu(z) - \fint_\Delta \nu(y) \, d\sigma(y) \right|^2 \, d \sigma(z) \right)^\frac{1}{2} \le C\eta^\gamma.
\end{equation}

Now, we depart from \cite{bortzhofmann} and apply a variant of our compactness argument in Theorem  \ref{nuBMOthrm.thrm}. Notice that, writing \eqref{eq33} compactly, we have
$$\lVert \nu \rVert_{*}(Q, \eta) < C \eta^\gamma,$$ 
where $\lVert \nu \rVert_{*}(Q, \eta)$ is as in Corollary \ref{cor:locally}. We remind the reader that $\om^+$ satisfies the two-sided $(M,R_0)$-corkscrew condition with constants independent of $\eta$.
Choosing $\eta$ such that $C\eta^\gamma < c_\delta$ we apply the ideas of Corollary \ref{cor:locally} to obtain that $\pom$ is $(\delta, \eta r_\delta)$-Reifenberg flat (and thus a $(\delta, \eta r_\delta)$-chord arc domain). 
\end{proof}

We close with a final modification to  Theorems 1.1 and 1.2 in \cite{bortzhofmann}. In the presence of an additional hypothesis to ensure the existence of corkscrews, then in the setting of Theorems 1.1 and 1.2 in \cite{bortzhofmann}, we may also conclude that $\pom$ is vanishing Reifenberg flat. This follows immediately from Corollary \ref{nuVMOthrm.thrm}. In fact, using Lemma \ref{uranddoublingimpliescorkscrews}, an additional hypothesis is not required except for the situation that $\pom$ has infinite diameter and both poles are finite.   Indeed, we had this application in mind when we began working on this paper.

\section*{Acknowledgments}
The authors would like to thank Steve Hofmann and Tatiana Toro for their helpful discussions. 

\bibliography{BEVMO}{}
\bibliographystyle{amsalpha}

\end{document}